\theoremstyle{plain}
\newtheorem{theorem}{Theorem}[section]
\newtheorem{proposition}[theorem]{Proposition}
\newtheorem{def-thm}[theorem]{Definition-Theorem}
\newtheorem{lemma}[theorem]{Lemma}
\theoremstyle{definition}
\newtheorem{example}[theorem]{Example}
\newtheorem{conjecture}[theorem]{Conjecture}
\def \kbar {\overline{k}}
\def \O {\mathcal{O}}
\def \AA {\mathcal{A}}
\def \QQ {\mathbb{Q}}
\def \RR {\mathbb{R}}
\DeclareMathOperator{\Supp}{Supp}
\DeclareMathOperator{\Pic}{Pic}
\DeclareMathOperator{\NS}{NS}
\DeclareMathOperator{\codim}{codim}
\DeclareMathOperator{\Span}{Span}
\begin{document}

\title[Degeneracy of integrals points]{On the degeneracy of integral points and entire curves in the complement of nef effective divisors}

\author{Gordon Heier}\author{Aaron Levin}
\address{Department of Mathematics\\University of Houston\\4800 Calhoun Road\\ Houston, TX 77204\\USA}
\email{heier@math.uh.edu}

\address{Department of Mathematics\\Michigan State University\\619 Red Cedar Road\\ East Lansing, MI 48824\\USA}
\email{adlevin@math.msu.edu}
\thanks{The second author was supported in part by NSF grants DMS-1352407 and DMS-2001205.}

\subjclass[2010]{11G35, 11G50, 11J87, 14C20, 14G05, 14G40, 32H30, 32Q45}

\keywords{Schmidt's subspace theorem, Diophantine approximation, hyperbolicity, integral points, entire curves}

\begin{abstract}
As a consequence of the divisorial case of our recently established generalization of Schmidt's subspace theorem, we prove a degeneracy theorem for integral points on the complement of a union of nef effective divisors.  A novel aspect of our result is the attainment of a strong degeneracy conclusion (arithmetic quasi-hyperbolicity) under weak positivity assumptions on the divisors. The proof hinges on applying our recent theorem with a well-situated ample divisor realizing a certain lexicographical minimax. We also explore the connections with earlier work by other authors and make a conjecture regarding bounds for the numbers of divisors necessary, including consideration of the question of arithmetic hyperbolicity. Under the standard correspondence between statements in Diophantine approximation and Nevanlinna theory, one obtains analogous degeneration statements for entire curves.
\end{abstract}

\maketitle

\section{Introduction}

Siegel's theorem on integral points on affine curves asserts that an affine curve $C$ over a number field $k$ has only finitely many integral points if $C$ has at least $3$ points at infinity (over $\kbar$).  This statement implies the more usual version of Siegel's theorem which requires the condition at infinity only if $C$ is rational (e.g., see \cite[Remark 7.3.10]{BG}).  A new line of results opened up when Corvaja and Zannier \cite{CZ02} gave a novel proof of Siegel's theorem using Schmidt's subspace theorem from Diophantine approximation.  Following subsequent work of Corvaja and Zannier \cite{CZ04b}, the second author proved the following generalization of Siegel's theorem to surfaces.

\begin{theorem}[{\cite[Theorem 11.5A]{levin_annals}}]
\label{Levin}
Let $X$ be a non-singular projective surface defined over a number field $k$.  Let $D_1,\ldots, D_q$ be effective ample divisors on $X$, defined over $k$, in general position and let $D=\sum_{i=1}^qD_i$.  
\begin{enumerate}
\item If $q\geq 4$ then $X\setminus D$ is arithmetically quasi-hyperbolic.
\item If $q\geq 5$ then $X\setminus D$ is arithmetically hyperbolic.
\end{enumerate}
\end{theorem}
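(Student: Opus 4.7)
My plan is to apply Schmidt's subspace theorem via the Corvaja-Zannier strategy, adapted to the surface setting. Suppose toward contradiction that $X\setminus D$ contains a Zariski-dense set $\Sigma$ of $(D,S)$-integral points, for a finite set $S$ of places of $k$ containing the archimedean ones. The general position hypothesis forces at most two of the $D_i$ to pass through any closed point, so for each $P\in\Sigma$ and each $v\in S$, the local sum $\sum_{i=1}^{q}\lambda_{D_i,v}(P)$ is asymptotically concentrated on at most two indices. A standard pigeonhole on the finite set of ordered pairs allows one to assume that this distinguished pair $(D_{i_v},D_{j_v})$ is independent of $P$ for each $v\in S$.

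Next, I would fix an ample divisor $A$ and a large integer $N$ and build, at each $v\in S$, a filtration of $H^0(X,NA)$ by the subspaces $V_{(a,b),v}=H^0(X,NA-aD_{i_v}-bD_{j_v})$, ordered lexicographically in $(a,b)$. Asymptotic Riemann-Roch gives $h^0(X,NA)\sim \tfrac12 N^2 A^2$, while the total lexicographic weight of a basis compatible with the filtration is cubic in $N$, with leading coefficient a polynomial in the intersection numbers of $A$ with $D_{i_v}$ and $D_{j_v}$. Applying Schmidt's subspace theorem to the projective embedding of $X$ by $|NA|$, evaluated on such a basis at each $v\in S$, produces an inequality relating $\sum_{i,v}\lambda_{D_i,v}(P)$ to $h_A(P)$ up to a bounded defect supported on a proper Zariski-closed subset $Z\subsetneq X$. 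Selecting $A$ as a positive rational combination of the $D_i$ realizing the lexicographical minimax alluded to in the abstract and letting $N\to\infty$, the resulting inequality becomes incompatible with integrality whenever $q\geq 4$, forcing all but finitely many points of $\Sigma$ into $Z$. This establishes part (i).

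For part (ii), the defect $Z$ is a finite union of irreducible curves $C\subset X$ of bounded degree. I would show that under $q\geq 5$, each such $C$ meets at least three of the divisors $D_i$ in distinct geometric points: the bound on $\deg C$ combined with the general position of the $D_i$ and the numerical constraints implicit in the Schmidt computation forces this, since any curve meeting only two of the $D_i$ would produce an inequality capping $q$ at $4$. Consequently, each $C\setminus D$ has at least three geometric points at infinity, and Siegel's theorem on $C$ yields finitely many $S$-integral points on $C\setminus D$; taking the union over the finitely many such $C$ completes the proof of arithmetic hyperbolicity.

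The main obstacle is the second step: the auxiliary ample divisor $A$ must be engineered so that the numerical threshold from the Schmidt inequality lands precisely at $q=4$. This is the role of the lexicographical minimax construction, and it is the technical heart of the argument: naive choices such as $A=D$ or a single $D_i$ produce only substantially weaker thresholds, while the correct $A$ must simultaneously optimize the filtration contributions coming from \emph{every} pair $(D_{i_v},D_{j_v})$ that can arise across $v\in S$.
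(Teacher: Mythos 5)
The statement you are proving is cited in the paper (from Levin's 2009 Annals paper), not proved there, so I compare your sketch against the known proof. Your high-level framework --- Schmidt's subspace theorem applied to a projective embedding by $|NA|$, with filtrations of $H^0(X,NA)$ by orders of vanishing along the at-most-two divisors $D_i$ through each point --- is indeed the Corvaja--Zannier/Levin approach, and the reduction using general position (at most two of the $D_i$ through any point on a surface) is correct. However, there are two genuine gaps.

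First, the entire quantitative content of the theorem is compressed into the sentence that the ``resulting inequality becomes incompatible with integrality whenever $q\geq 4$.'' You never exhibit the inequality nor explain why $4$ (and not $5$ or $6$) is the threshold. In Levin's proof this is the hard combinatorial core: one must estimate, for a well-chosen $A$, the cubic-in-$N$ weighted sum of vanishing orders of a filtered basis, compare it to $h^0(X,NA)\cdot h_A$, and show the resulting defect is strictly positive precisely once $q\geq 4$. This is where the ampleness of each $D_i$ and the intersection-theoretic optimization are actually used, and no indication is given here. Moreover, the ``lexicographical minimax'' you invoke from the abstract is the device introduced in the present paper for Theorem~\ref{mthm} (the nef-divisor setting with $r$ generators), not what Levin uses for Theorem 11.5A; borrowing it here does not fill the gap and somewhat conflates two different arguments.

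Second, your argument for part (ii) is both overcomplicated and, as written, incorrect in detail. You appeal to ``the bound on $\deg C$,'' the general position of the $D_i$, and ``the numerical constraints implicit in the Schmidt computation'' to show that each exceptional curve $C$ meets at least three of the $D_i$ in distinct points, and you even say a curve could ``meet only two of the $D_i$.'' But an irreducible curve $C\not\subset\Supp D$ meets \emph{every} $D_i$ because each $D_i$ is ample, so ``meeting only two'' cannot happen. The true argument is elementary: since $q\geq 5$ and at most two of the $D_i$ pass through any point (general position), at most four of the $D_i$ can be accounted for by two points of $C\cap D$, leaving a fifth $D_i$ that must meet $C$ in a third distinct point; hence $C\setminus D$ has at least three points at infinity and Siegel's theorem applies. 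No degree bound or Schmidt-computation input is needed. You should also note that the exceptional set $Z$ from part (i) is geometric (independent of $k$ and $S$), which is what makes this reduction to Siegel on each component legitimate for \emph{all} choices of $k'$ and $S$.
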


The conclusion of arithmetic quasi-hyperbolicity means roughly that $S$-integral points on $X\setminus D$ are contained (up to finitely many points) in a proper closed subset $Z\subset X$ which is {\it geometric}, that is, independent of the number field and set of places $S$.  More formally, given a variety $V=X\setminus D$ defined over a number field $k$, we say that $V$ is {\it arithmetically quasi-hyperbolic} if there exists a proper closed subset $Z\subset X$ such that for every number field $k'\supset k$, every finite set of places $S$ of $k'$ containing the archimedean places, and every set $R$ of ($k'$-rational) $(D,S)$-integral points on $X$, the set $R\setminus Z$ is finite.  We say that $X\setminus D$ is {\it arithmetically hyperbolic} if all sets of $(D,S)$-integral points on $X$ are finite (i.e., one may take $Z=\emptyset$ in the definition of quasi-hyperbolicity). We refer the reader to \cite[Ch.~1,~\S4]{Vojta_LNM} for the notion of $(D,S)$-integral sets of points.  If $X$ is a projective variety of dimension $n$, we say that effective (possibly reducible) Cartier divisors $D_1,\ldots, D_q$ on $X$ are in {\it general position} if for any subset $I\subset\{1,\ldots, q\}$ with $|I|\leq n+1$ we have $\codim \cap_{i\in I} \Supp D_i\geq |I|$, where $\Supp D_i$ denotes the support of $D_i$ and we use the convention that $\dim \emptyset=-1$.

In general, a conjecture of the second author \cite[Conjecture 5.4A]{levin_annals} (slightly modified) states:

\begin{conjecture}
\label{LevConj}
Let $X$ be a projective variety, defined over a number field $k$, of dimension $n$.  Let $D_1,\ldots, D_q$ be effective ample Cartier divisors on $X$, defined over $k$, in general position, and let $D=\sum_{i=1}^qD_i$.  
\begin{enumerate}
\item If $q\geq n+2$, then $X\setminus D$ is arithmetically quasi-hyperbolic. \label{conjparta}
\item If $q\geq 2n+1$, then $X\setminus D$ is arithmetically hyperbolic.\label{conjpartb}
\end{enumerate}
\end{conjecture}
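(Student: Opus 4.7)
The plan is to argue by induction on $n = \dim X$, using the Corvaja-Zannier strategy based on Schmidt's subspace theorem. The base case $n = 1$ is exactly Siegel's theorem on affine curves with at least three points at infinity, which simultaneously supplies both the quasi-hyperbolic bound $n + 2 = 3$ and the hyperbolic bound $2n + 1 = 3$. For the inductive step, fix a number field $k' \supset k$, a set of places $S$, and a set $R$ of $(D,S)$-integral points on $X \setminus D$ over $k'$.

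The core construction is as follows. Choose an ample Cartier divisor $A$ on $X$ and a large integer $N$, and consider $V_N = H^0(X, NA)$ together with the $q$ filtrations $\{H^0(X, NA - k D_i)\}_{k \geq 0}$. The general-position hypothesis allows the construction, for each $P \in R$ lying outside a suitable proper closed subset, of a single basis of $V_N$ that is lexicographically compatible with all $q$ filtrations at once. Applying the divisorial form of Schmidt's subspace theorem---the generalization the authors mention in the abstract---to these basis sections produces an upper bound
\[
\sum_{v \in S} \sum_{i=1}^q \lambda_{D_i, v}(P) \leq (\alpha(A) + \varepsilon)\, h_A(P) + O(1)
\]
valid outside a Zariski-closed $Z \subsetneq X$ depending only on $A$ and the $D_i$, where $\alpha(A)$ is an explicit Seshadri-type invariant of the configuration. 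Since $P$ is $(D,S)$-integral, the left-hand side is also bounded below by $(q - \varepsilon) h_A(P) + O(1)$, so choosing $A$ with $\alpha(A) \leq n + 1$ yields a contradiction on $R \setminus Z$ whenever $q \geq n + 2$, proving part (a). Part (b) then follows by applying the inductive hypothesis to the (normalized) components of $Z$: the restrictions $D_i|_Z$ remain ample and, after a generic choice of $A$, retain enough general position to run the argument, while the count $q \geq 2n + 1$ is precisely what is needed to absorb the drop in dimension at each induction step.

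The main obstacle is exhibiting an ample $A$ with $\alpha(A) \leq n + 1$. In dimension $n = 2$ an elementary intersection-theoretic computation on surfaces produces such an $A$, yielding Theorem~\ref{Levin}. In higher dimension, the analogous minimax problem over the ample cone is genuinely hard: the filtration weights depend delicately on the higher-codimension intersection data of the $D_i$, and no sharp general bound is currently known. A secondary, but also nontrivial, difficulty is ensuring that the exceptional set $Z$ is geometric (independent of $k'$ and $S$), which requires uniform control of the filtration indices as $N \to \infty$. The paper under discussion addresses part of this by realizing an infimum of $\alpha$ via a lexicographical minimax, which suffices to handle the weaker hypothesis of nef effective divisors; pushing the method to produce the sharp Seshadri-type constant $n + 1$ needed for the full conjecture appears to lie beyond the reach of current techniques.
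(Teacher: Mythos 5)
The statement you have been asked to prove is Conjecture~\ref{LevConj} in the paper, and it is genuinely open: the paper does not prove it, but only establishes weaker partial results in its direction (Theorem~\ref{Levin} for the case $n=2$, Autissier's Theorem~\ref{Autissier} with the bound $q\geq 2n$, Theorem~\ref{Levin2} with the bound $q\geq 2n^2$, and the paper's new Theorem~\ref{mthm} under a nef-cone hypothesis). Your proposal, to its credit, correctly identifies the dominant strategy in the literature --- the Corvaja--Zannier filtration method, refined through Evertse--Ferretti, Autissier, and the present authors --- and correctly diagnoses the central obstruction: one would need, for each configuration of $q\geq n+2$ ample divisors in general position, an ample $A$ whose Seshadri-type constant $\alpha(A)$ can be pushed down to $n+1$, and no way to do this in dimension $n\geq 3$ is currently known. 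You acknowledge this, so what you have written is an honest strategy sketch rather than a proof, which is the appropriate stance for a conjecture.

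Even read as a conditional sketch, though, part (b) has a genuine gap. You propose to deduce arithmetic hyperbolicity by running the part (a) argument inductively on components of the exceptional set $Z$, asserting that the restrictions $D_i|_{Z'}$ remain ample and in general position and that $q\geq 2n+1$ is ``precisely what is needed to absorb the drop in dimension.'' None of this is automatic: a component $Z'$ of $Z$ may be contained in one or more of the $D_i$ (so that $D_i|_{Z'}$ is not even a divisor), general position typically fails upon restriction, and the number of usable divisors can drop by more than one per dimension lost. This is precisely why the known unconditional progress toward part (b) requires the much larger bound $q\geq 2n^2$ of Theorem~\ref{Levin2} and is obtained by a quite different device rather than a clean dimension induction. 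There is also an imprecision in the integrality lower bound you invoke: for $q$ distinct, non-proportional ample divisors $D_i$, one does not get $\sum_{v\in S}\sum_i\lambda_{D_i,v}(P)\geq (q-\varepsilon)h_A(P)+O(1)$ without first comparing each $D_i$ to $A$; this is exactly the role of the rational constants $c_i$ with $A-c_iD_i$ nef in the paper's Theorem~\ref{Cor2}, and it cannot be elided.
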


It was also observed in \cite{levin_annals} that when $X$ is non-singular and $D$ has normal crossings, part \eqref{conjparta} of the conjecture follows from (Bombieri-Lang-)Vojta's conjecture on the quasi-hyperbolicity of varieties of log general type and Mori theory \cite[Lemma 1.7]{Mori}.

Shortly after work of Corvaja, Levin, and Zannier \cite{CLZ}, Autissier proved the following result towards Conjecture \ref{LevConj}\eqref{conjparta}: 

\begin{theorem}[{\cite[Th\'eor\`eme 1.3, Remarque 2.3]{Aut11}}]
\label{Autissier}
Let $X$ be a Cohen-Macaulay projective variety, defined over a number field $k$, of dimension $n\geq 2$.  Let $D_1,\ldots, D_q$ be effective ample Cartier divisors on $X$, defined over $k$, in general position and let $D=\sum_{i=1}^qD_i$.  If 
\begin{align*}
q\geq 2n,
\end{align*}
then $X\setminus D$ is arithmetically quasi-hyperbolic.
\end{theorem}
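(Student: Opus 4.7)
The plan is to deduce the theorem by applying a divisorial form of Schmidt's subspace theorem (for instance Evertse--Ferretti's, or the variants developed by Corvaja--Zannier and used in the proof of Theorem~\ref{Levin}) to global sections of $H^0(X, NA)$ organized by a filtration adapted to the relative $v$-adic position of an integral point with respect to the $D_i$. Concretely, I would take $A = \sum_{i=1}^q D_i$ (ample), fix a large integer $N$, set $V_N = H^0(X, NA)$, and for each permutation $\sigma$ of $\{1,\ldots,q\}$ and each $\mathbf{b} = (b_1,\ldots,b_n) \in \mathbb{Z}_{\geq 0}^n$ consider the subspace
\[
W(\sigma, \mathbf{b}) \;=\; H^0\!\left(X, \, NA - \sum_{j=1}^{n} b_j D_{\sigma(j)}\right) \;\subseteq\; V_N.
\]
Only the first $n$ divisors in each permutation appear because $\dim X = n$: by the general-position hypothesis, $\bigcap_{j=1}^n D_{\sigma(j)}$ is zero-dimensional, and together with the Cohen--Macaulay hypothesis on $X$ this makes $D_{\sigma(1)}, \ldots, D_{\sigma(n)}$ a regular sequence, so the Koszul complex computes $\dim W(\sigma, \mathbf{b})$ with the expected asymptotic Riemann--Roch value. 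Refining the lexicographic order on $\mathbf{b}$ to a full flag on $V_N$ then produces successive quotients whose dimensions can be read off from the simplex $\{b_j \geq 0, \sum_j b_j \leq N\}$.

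Next, for each place $v \in S$ and each $(D, S)$-integral point $P$, I would choose a permutation $\sigma_v$ ordering the divisors by decreasing $v$-adic proximity to $P$, so that $\lambda_{D_{\sigma_v(1)}, v}(P) \geq \cdots \geq \lambda_{D_{\sigma_v(q)}, v}(P)$. Choosing a basis of $V_N$ that refines all these flags simultaneously and applying Schmidt's subspace theorem yields, outside a proper Zariski-closed exceptional subset $Z \subsetneq X$, a bound of the form
\[
\sum_{v \in S} \sum_{\text{flag step } s} \Bigl(\sum_{j=1}^n b_{s,j}(v)\Bigr) \lambda_{D_{\sigma_v(j)}, v}(P) \;\leq\; (1+\varepsilon) \cdot \dim(V_N) \cdot h_{NA}(P).
\]
Combining this with the local comparison $\lambda_{D_{\sigma_v(1)}, v}(P) + \cdots + \lambda_{D_{\sigma_v(n)}, v}(P) \geq (n/q) \sum_{i=1}^q \lambda_{D_i, v}(P)$ (since these are the $n$ largest contributions among $q$) and the $(D, S)$-integrality identity $\sum_{v \in S} \lambda_{D_i, v}(P) = h_{D_i}(P) + O(1)$ converts the above into a global height inequality.

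Dividing by $\dim V_N$ and letting $N \to \infty$, the combinatorial constant obtained by integrating $\sum_j b_j$ over the simplex, compared against $q$, produces the contradiction $h_A(P) = O(1)$ outside $Z$ as soon as $q \geq 2n$; Northcott's theorem then gives finiteness of $S$-integral points outside $Z$, which is arithmetic quasi-hyperbolicity. The main obstacle I expect is verifying that the flag has the \emph{expected} dimensions uniformly in $\sigma$ (this is precisely where the Cohen--Macaulay hypothesis enters, via the regularity of the sequences $D_{\sigma(1)}, \ldots, D_{\sigma(n)}$ and hence the exactness of the associated Koszul complexes) and sharply computing the simplex-volume constant to produce the threshold $q \geq 2n$.
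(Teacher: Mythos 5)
The paper does not prove Theorem~\ref{Autissier}; it cites \cite{Aut11} (and implicitly the combinatorial refinements in \cite{Aut09}), so there is no ``paper's own proof'' to compare against. Your sketch is in the right family of arguments --- the filtration-plus-Subspace-Theorem strategy due to Corvaja--Zannier, Evertse--Ferretti, Levin, and Autissier --- but as written it has three genuine gaps.

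First, the comparison $\lambda_{D_{\sigma_v(1)},v}(P)+\cdots+\lambda_{D_{\sigma_v(n)},v}(P)\geq (n/q)\sum_{i=1}^q\lambda_{D_i,v}(P)$ is true but far too weak, and in fact sinks the argument. The essential input is the \emph{strong} form of general position: since $\bigcap_{i\in I}\Supp D_i=\emptyset$ whenever $|I|>n$, the local heights $\lambda_{D_{\sigma_v(j)},v}(P)$ for $j>n$ are $O(1)$, so $\sum_{j=1}^n\lambda_{D_{\sigma_v(j)},v}(P)=\sum_{i=1}^q\lambda_{D_i,v}(P)+O(1)$. Replacing this by the factor $n/q$ throws away exactly the numerical margin that the argument needs. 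Carrying through a toy computation in the numerically equivalent case (each $D_i\equiv H$, $A=qH$, simplex $\{\sum b_j\leq Nq\}$) one finds the strong comparison gives a contradiction for $q\geq n+2$, whereas the $(n/q)$ comparison yields $\tfrac{n}{n+1}h_A(P)\leq(1+\epsilon)h_A(P)$ --- no contradiction at all.

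Second, the base polytope for the filtration is not the simplex $\{b_j\geq 0,\,\sum_j b_j\leq N\}$. Even in the simplest case where all $D_i\equiv H$ and $A=\sum_iD_i=qH$, the region where $NA-\sum_jb_jD_{\sigma(j)}$ has sections is $\{\sum_j b_j\lesssim Nq\}$, not $\leq N$; and for \emph{non}-numerically-equivalent ample $D_i$ the region is a genuinely more complicated polytope determined by when $NA-\sum_jb_jD_{\sigma(j)}$ is big. Estimating the moments $\sum_{\mathbf b}b_j$ over this polytope uniformly in $\sigma$ is where the difficulty concentrates, and it is where the Cohen--Macaulay/Koszul input is used to justify the asymptotic dimension counts; a bare simplex-volume computation does not produce the correct constants here.

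Third, and most substantively: the threshold $q\geq 2n$ (rather than $2n+1$) is precisely what Autissier proves and is not obtained by ``sharply computing the simplex-volume constant.'' The earlier Corvaja--Levin--Zannier argument along the lines you describe yields $q\geq 2n+1$; the improvement to $2n$ comes from Autissier's specific combinatorial lemma (\cite[Lemme 4.2, Corollaire 4.3]{Aut09}), which is a genuine new device in the filtration. Without some version of that input your plan would, if executed correctly, at best reprove the $2n+1$ bound. So the plan is a reasonable high-level description of the landscape, but it does not yet constitute a route to the stated theorem as opposed to its weaker predecessor.
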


Towards Conjecture \ref{LevConj}\eqref{conjpartb}, we have:

\begin{theorem}
\label{Levin2}
Under the hypotheses of Conjecture \ref{LevConj}, if $n\geq 2$ and
\begin{align*}
q\geq 2n^2,
\end{align*}
then $X\setminus D$ is arithmetically hyperbolic.
\end{theorem}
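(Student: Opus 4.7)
The plan is to argue by induction on $n = \dim X$, using arithmetic quasi-hyperbolicity on $X$ to descend to the irreducible components of the exceptional set and closing on curves with Siegel's theorem. The base case $n = 2$ is immediate: Theorem \ref{Levin}(ii) gives arithmetic hyperbolicity already for $q \geq 5$, and $2n^2 = 8 \geq 5$.

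For the inductive step, fix $n \geq 3$ and assume the theorem in all strictly smaller dimensions (with the $2m^2$ bound for $m \geq 2$). Since $q \geq 2n^2 \geq 2n$, either Theorem \ref{Autissier} (in the Cohen-Macaulay case) or the paper's own divisorial Schmidt-type theorem (in general) places all but finitely many $(D,S)$-integral points on a geometric proper closed subvariety $Z \subsetneq X$. It then suffices to prove that the $(D,S)$-integral points on each irreducible component $Y$ of $Z$ are finite.

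Fix such an irreducible component $Y$, of dimension $m \in \{1,\ldots,n-1\}$. Since $D_1,\ldots,D_q$ are in general position on $X$, $Y$ cannot lie in more than $n-m$ of them: if $Y \subset D_{i_1}\cap\cdots\cap D_{i_k}$, then $\codim_X Y = n-m \geq k$. After reindexing, $D_1|_Y,\ldots,D_{q'}|_Y$ are effective ample Cartier divisors on $Y$ with $q' \geq q - (n-m)$, and the identity
\[
q' - 2m^2 \;\geq\; 2n^2 - (n-m) - 2m^2 \;=\; (n-m)(2n+2m-1) \;\geq\; 0
\]
gives $q' \geq 2m^2$. If $m \geq 2$, the inductive hypothesis applied to $Y$ yields the desired finiteness; if $m = 1$, then $Y$ is a curve carrying $q' \geq 2n^2 - n + 1$ effective ample divisors, and since no point of $Y$ lies on more than $n$ of the $D_i$ (again by general position on $X$), the incidence count forces at least $q'/n \geq 2n-1 \geq 3$ distinct points in $Y\cap D$, so Siegel's theorem finishes the argument on $Y$.

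The main obstacle, and the step requiring the most care, is that general position of the $D_i$ on $X$ does not in itself imply general position of the restrictions $D_i|_Y$ on $Y$: the intersections $Y \cap D_{i_1}\cap\cdots\cap D_{i_k}$ may have codimension in $Y$ strictly smaller than $k$, so the inductive hypothesis cannot be invoked naively. My plan for overcoming this is either a secondary induction on the (proper closed) locus in $Y$ where general position fails, or, more uniformly, a direct application of the paper's divisorial Schmidt-type theorem to $Y$ with a well-situated ample divisor realizing the lexicographical minimax alluded to in the abstract, which is tailored precisely to bypass a strict general-position requirement. The bound $q \geq 2n^2$ is exactly what allows the restriction $q \mapsto q-(n-m)$ to retain enough divisors at every $1 \leq m \leq n-1$, which is the source of the quadratic growth in $n$.
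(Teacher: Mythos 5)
The paper itself offers no standalone proof of this statement: it is obtained by citing \cite[Theorem 9.11A]{levin_annals} and noting that the improvement from $q\geq 2n^2+1$ to $q\geq 2n^2$ follows by substituting an estimate of Autissier \cite[Lemme 4.2, Corollaire 4.3]{Aut09} into the argument of \cite{levin_annals}. Your inductive strategy --- pass to the exceptional set, restrict to its irreducible components, account for the at most $n-m$ divisors containing an $m$-dimensional component, and check that $q - (n-m) \geq 2m^2$ --- is in the right spirit and is likely the high-level skeleton of the cited argument, and your numerical verification $q'-2m^2 \geq (n-m)(2n+2m-1)\geq 0$ is correct.

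However, the gap you flag at the end is genuine and your two proposed fixes do not close it. Fix (ii), ``apply the paper's divisorial Schmidt-type theorem with a well-situated ample divisor,'' fails at the outset because Theorem \ref{Cor2} \emph{also} requires the divisors to be in general position on the ambient variety (here $Y$), so it offers no escape from the very hypothesis whose inheritance is in doubt; the ``lexicographical minimax'' machinery of Proposition \ref{keyprop} likewise sits on top of Theorem \ref{Cor2} and inherits the same hypothesis. Fix (i), a secondary induction on the failure locus, is too vague to assess: on the locus $W\subset Y$ where general position fails you again have no control on general position of the restricted divisors, and while the \emph{original} general position on $X$ does bound the number of $D_i$ containing $W$ by $n-\dim W$, that does not give general position of the surviving restrictions $D_i|_W$, which is what the inductive hypothesis demands. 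There is also a second, quieter gap in the base case: you invoke Theorem \ref{Levin}\,(ii), which requires $X$ to be a \emph{non-singular} projective surface, whereas Theorem \ref{Levin2} (and the components $Y$ of $Z$ you reduce to) are allowed to be singular; neither Theorem \ref{Levin} nor Theorem \ref{Autissier} (which additionally assumes Cohen--Macaulay) covers this case, so even the $n=2$ step would need the general form of \cite[Theorem 9.11A]{levin_annals}, at which point one may as well cite it directly as the paper does.
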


This was proved in \cite[Theorem 9.11A]{levin_annals} assuming the inequality $q\geq 2n^2+1$.  The slight improvement given here comes from applying the same proof as in \cite{levin_annals}, but with an improved estimate of Autissier \cite[Lemme 4.2, Corollaire 4.3]{Aut09}. 

It is essential in Theorem \ref{Autissier} that the divisors satisfy ampleness or some other positivity condition of similar strength.  Indeed, if $X$ contains a Zariski dense set of $D$-integral points, then by blowing up points in $D$, one obtains a variety $\tilde{X}$ and a divisor $\tilde{D}$ on $\tilde{X}$ with an arbitrarily large number of components and $X\setminus D\cong \tilde{X}\setminus \tilde{D}$ (and hence there will be a Zariski dense set of $\tilde{D}$-integral points on $\tilde{X}$).  Thus, without a positivity assumption of some sort, there is no inequality on the number of components $q$ sufficient to guarantee Zariski non-density of integral points.  However, as is well known, each time we blow up the variety $X$ the rank of the Picard group increases by one.  Taking into account the rank of the subgroup in $\Pic X$ generated by $D_1,\ldots, D_q$, Vojta proved: 

\begin{theorem}[{\cite[Theorem 2.4.1]{Vojta_LNM}}]
\label{Vojta1}
Let $X$ be a projective variety, defined over a number field $k$, of dimension $n$.  Let $D=\sum_{i=1}^qD_i$ be a sum of distinct prime Cartier divisors on $X$ defined over $k$.  Let $r$ be the rank of the subgroup in $\Pic X$ generated by $D_1,\ldots, D_q$.  If 
\begin{align*}
q\geq  n+r+1,
\end{align*}
then all sets of $(D,S)$-integral points on $X$ are not Zariski dense.
\end{theorem}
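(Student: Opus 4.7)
The plan is to exploit the rank hypothesis to construct rational functions on $X$ whose values at $(D,S)$-integral points are $S$-units, and then to invoke Laurent's theorem on $S$-unit points in subvarieties of algebraic tori, which is a standard consequence of Schmidt's subspace theorem. After reordering the $D_i$ so that the classes of $D_1,\ldots,D_r$ span the subgroup of $\Pic X$ generated by $D_1,\ldots,D_q$ after tensoring with $\QQ$, for each $j \in \{r+1,\ldots,q\}$ there exist an integer $N_j > 0$ and integers $a_{jl}$ with $N_j[D_j] = \sum_{l=1}^r a_{jl}[D_l]$ in $\Pic X$; this produces a rational function $f_j \in k(X)^*$ with $\dv f_j = N_j D_j - \sum_{l=1}^r a_{jl} D_l$. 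The functions $f_{r+1},\ldots,f_q$ are multiplicatively independent modulo $k^*$: a relation $\prod_j f_j^{e_j} \in k^*$ would force $\sum_j e_j \dv f_j = 0$ in the free abelian group on prime divisors of $X$, and reading the coefficient of the prime divisor $D_j$ (for $j > r$) forces $e_j N_j = 0$, hence $e_j = 0$.

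Next, consider the rational map $\phi = (f_{r+1},\ldots,f_q) \colon X \dashrightarrow \mathbb{G}_m^{q-r}$ and its image closure $Y := \overline{\phi(X)}$, an irreducible subvariety of $\mathbb{G}_m^{q-r}$. Multiplicative independence of the $f_j$ modulo $k^*$ is equivalent to $Y$ not being contained in any proper translate of a subtorus. Combined with $\dim Y \leq n$ and $q - r \geq n+1$, which forces $Y \subsetneq \mathbb{G}_m^{q-r}$, this gives that $Y$ is not itself a translate of a subtorus. For any $(D,S)$-integral point $P$ on which $\phi$ is defined, the integrality hypothesis yields $\lambda_{D_i,v}(P) = O(1)$ for every $v \notin S$ and every $i$, hence $\log|f_j(P)|_v = O(1)$ for $v \notin S$; applying the same bound to $f_j^{-1}$, each $f_j(P)$ is an $S$-unit, so $\phi(P) \in Y(k) \cap (\O_{k,S}^*)^{q-r}$. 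Laurent's theorem then places this intersection in a finite union of torus-translates $g_i T_i \subset Y$; since $Y$ is irreducible and not a torus-translate itself, each $T_i$ has $\dim T_i < \dim Y$, so $\phi(P)$ lies in a proper closed subset of $Y$. Pulling back by the dominant rational map $\phi$ and adjoining its base locus then places $P$ in a proper Zariski closed subset of $X$ independent of $P$.

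The main technical care is bookkeeping around the torsion in $\Pic X$ when passing from $\QQ$-linear to integer relations among the $[D_i]$, verifying that the integrality hypothesis transfers into the $S$-unit condition on each $f_j(P)$ with the correct additive constants, and checking that pulling back a proper closed subset of $Y$ through the dominant rational map $\phi$, together with the base locus, yields a proper closed subset of $X$. The arithmetic content is entirely absorbed into Laurent's theorem on $S$-unit points of subvarieties of tori, which is itself a standard consequence of Schmidt's subspace theorem; the numerical threshold $q - r \geq n+1$ is precisely what makes $Y$ proper in $\mathbb{G}_m^{q-r}$, and thus is what allows the argument to close.
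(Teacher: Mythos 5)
Your proof is correct, and it follows the standard route that Vojta's cited proof takes: the paper itself gives no argument for this statement but simply quotes \cite[Theorem 2.4.1]{Vojta_LNM}, and Vojta's proof there likewise clears denominators in $\Pic X$ to produce $q-r$ rational functions with divisorial support in $\Supp D$ that are multiplicatively independent modulo constants, observes that their values at $(D,S)$-integral points are $S$-units (after passing to $M_k$-constants and enlarging $S$), and then appeals to the $S$-unit-equation/subspace-theorem machinery on the image in $\mathbb{G}_m^{q-r}$. The only cosmetic difference is that you package the final step as an appeal to Laurent's theorem on torus subvarieties rather than reducing by hand to vanishing-subsum cases of the $S$-unit equation; these are essentially interchangeable, since Laurent's theorem is proved from the $S$-unit equation by exactly that reduction. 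Your key observations are all in order: reindexing so that $[D_1],\ldots,[D_r]$ span the $\QQ$-span and absorbing torsion by enlarging $N_j$; reading the coefficient of $D_{j_0}$ (for $j_0>r$) in a putative relation $\sum e_j\dv f_j=0$ to force $e_{j_0}=0$; the equivalence between multiplicative independence mod $k^*$ (hence mod $\kbar^*$, using that $k$ is algebraically closed in $k(X)$ for $X$ geometrically integral) and $Y$ not lying in a proper torus-coset; $q-r\geq n+1>\dim Y$ forcing $Y\subsetneq\mathbb{G}_m^{q-r}$, hence $Y$ not a coset; and finiteness of the coset family from Laurent, so their union is a proper closed subset of the irreducible $Y$, pulling back (together with the base locus of $\phi$) to a proper closed subset of $X$.
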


More generally, as an application of results on integral points on semiabelian varieties, Vojta proved a result depending on the rank in the N\'eron-Severi group $\NS X$. 
\begin{theorem}[{\cite[Corollary 0.3]{vojta_inv_math_1996}}]
\label{Vojta2}
Let $X$ be a projective variety, defined over a number field $k$, of dimension $n$.  Let $D=\sum_{i=1}^qD_i$ be a sum of distinct prime Cartier divisors on $X$ defined over $k$.  Let $r$ be the rank of the subgroup in $\NS X$ generated by $D_1,\ldots, D_q$.  If 
\begin{align*}
q\geq  n+r-h^1(X,\O_X)+1,
\end{align*}
then all sets of $(D,S)$-integral points on $X$ are not Zariski dense.
\end{theorem}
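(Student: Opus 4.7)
The plan is to reduce to the semiabelian Faltings--Vojta theorem underlying \cite{vojta_inv_math_1996} by constructing a morphism $\phi\colon V := X\setminus D \to G$ to a semiabelian variety $G$ with $\dim G > n$ that sends $(D,S)$-integral points on $V$ to $S$-integral points on an equivariant compactification of $G$. Once this is set up, the semiabelian theorem forces the integral points in $\overline{\phi(V)}$ into a finite union of proper translates of semiabelian subvarieties contained in $\overline{\phi(V)}$, whose $\phi$-preimage in $V$ is a proper Zariski closed subset.

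After replacing $X$ by a desingularization (so that $A := \Pic^0_{X/k}$ is an abelian variety of dimension $h^1(X,\O_X)$), the construction of $G$ proceeds as follows. The rank hypothesis yields $q - r$ linearly independent $\mathbb{Z}$-relations $\sum_{i=1}^q a_{ij}[D_i] = 0$ in $\NS(X)$ for $j = 1, \ldots, q-r$, each lifting to a class $M_j := \O_X\bigl(\sum_i a_{ij} D_i\bigr) \in A$. Via the Poincar\'e bundle on $X \times A$, these $M_j$ define a semiabelian extension
\[
0 \longrightarrow \mathbb{G}_m^{q-r} \longrightarrow G \longrightarrow A \longrightarrow 0
\]
of dimension $h^1(X,\O_X) + q - r \geq n + 1$. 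The morphism $\phi$ is assembled from the Albanese map $X \to A$ together with torus coordinates extracted from trivializations of $M_j \otimes \O_X(-\sum_i a_{ij} D_i)$ on $V$. A direct local computation at each component of $D$ shows that the $j$-th torus coordinate is an $S$-unit at every $(D, S)$-integral point, so $\phi$ sends $(D,S)$-integral points of $V$ to $S$-integral points on a suitable compactification of $G$.

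Applying Faltings--Vojta to $\overline{\phi(V)} \subseteq G$, the $S$-integral points of $\overline{\phi(V)}$ are contained in a finite union of translates $g + H$ with $g + H \subseteq \overline{\phi(V)}$ and $H \subseteq G$ a semiabelian subvariety. Since $\dim \overline{\phi(V)} \leq n < \dim G$, the image is a proper subvariety of $G$. The main obstacle is the remaining possibility that $\overline{\phi(V)}$ is itself a translate of a semiabelian subvariety, in which case Faltings--Vojta a priori permits integral points to be Zariski dense in $\overline{\phi(V)}$. Excluding this pathology is the delicate point: if $\overline{\phi(V)}$ were such a translate, the characters of $G$ restricting trivially to it would supply an extra $\mathbb{Z}$-linear relation among the $[D_i]$ modulo $\Pic^0(X)$, contradicting the maximality built into our choice of the $q-r$ independent $\NS$-relations. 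Once this is confirmed, the $\phi$-preimage of the above finite union of proper translates is a proper closed subset of $V$ containing all $(D,S)$-integral points, yielding the desired non-density.
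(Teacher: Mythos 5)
The paper cites this result from Vojta's 1996 paper and does not prove it, so I'll assess your proposal on its own terms. Your overall strategy---mapping $V = X\setminus D$ into a semiabelian variety $G$ of dimension $>n$ (which is essentially the generalized/quasi-Albanese $\mathrm{Alb}(V)$, an extension of $\mathrm{Alb}(X)$ by $\mathbb{G}_m^{q-r}$ classified by the $M_j\in\Pic^0(X)$), and then applying the Faltings--Vojta theorem---is indeed the route Vojta takes. One correction before the main point: the abelian quotient of $G$ must be $\mathrm{Alb}(X)$, not $\Pic^0_{X/k}$. These are dual abelian varieties, so your dimension count survives, but the Albanese map lands in $\mathrm{Alb}(X)$, and the classes $M_j\in\Pic^0(X)=\widehat{\mathrm{Alb}(X)}$ are precisely what classify $\mathbb{G}_m$-extensions of $\mathrm{Alb}(X)$; as written your $G$ is an extension of the wrong abelian variety.

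Your treatment of the ``delicate point''---excluding the case that $\overline{\phi(V)}$ is a translate $g+H$ of a semiabelian subvariety---has a genuine gap. You claim a character of $G$ trivial on $g+H$ would supply an ``extra'' $\NS$-relation among the $[D_i]$, contradicting maximality. But any such relation is automatically a $\mathbb{Z}$-linear combination of the $q-r$ chosen ones, so there is no ``extra'' relation and maximality gives no contradiction; moreover, the existence of a nontrivial character of $G$ trivial on $H$ is not free, since characters of $G$ form $\ker\bigl(c:\hat{T}\to\Pic^0(X)\bigr)$ (with $c(e_j)=M_j$), which can very well be zero. The argument that does work runs as follows: the Albanese image generates $\mathrm{Alb}(X)$, so $H$ surjects onto $\mathrm{Alb}(X)$ and $T_H:=T\cap H$ is a \emph{proper} subtorus of $T$; then $H/T_H$ splits the extension $G/T_H\to\mathrm{Alb}(X)$, which forces $c$ to vanish on the nonzero saturated sublattice $\widehat{T/T_H}\subset\hat{T}$; any nonzero $m$ there gives a nontrivial character $\chi_m$ of $G$ trivial on $H$, hence $\chi_m\circ\phi$ is constant; but $\chi_m\circ\phi$ is (up to a scalar) a rational function on $X$ with divisor $\sum_i\bigl(\sum_j m_j a_{ij}\bigr)D_i$, so this divisor vanishes, and since the $D_i$ are distinct primes this forces $\sum_j m_j a_{ij}=0$ for all $i$, contradicting the \emph{linear independence} (not maximality) of your chosen relations. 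Equivalently, and more cleanly, one invokes the universal property of the quasi-Albanese: $\phi(V)$ generates $\mathrm{Alb}(V)$, hence cannot lie in a translate of a proper semiabelian subvariety. As stated, your proposal does not close this case.
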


In both Theorems \ref{Vojta1} and \ref{Vojta2} it is easy to see (e.g., from Examples \ref{P2lines} and \ref{P1squared}) that the conclusions cannot be strengthened to quasi-hyperbolicity statements.  

Under a combined ampleness and general position assumption, Noguchi and Winkelmann proved a finiteness statement.

\begin{theorem}[{\cite[Theorem 9.7.6]{NW}}]
\label{NW}
Let $X$ be a projective variety, defined over a number field $k$, of dimension $n$.  Let $D=\sum_{i=1}^qD_i$ be a sum of ample effective Cartier divisors in general position on $X$ defined over $k$.  Let $r$ be the rank of the subgroup in $\NS X$ generated by $D_1,\ldots, D_q$.  If 
\begin{align*}
q\geq 2n+r,
\end{align*}
then $X\setminus D$ is arithmetically hyperbolic.
\end{theorem}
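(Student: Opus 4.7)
My approach is induction on $n = \dim X$, using Theorem \ref{Vojta2} as the engine to drop dimension.

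For the base case $n = 1$, $X$ is an irreducible projective curve, so $\NS X$ has rank one and $r = 1$; general position of ample effective divisors on a curve is just distinctness of their supports. The hypothesis $q \geq 2n + r = 3$ therefore produces at least three points in $\Supp D$, and Siegel's theorem yields finiteness of $(D,S)$-integral points on $X \setminus D$ (passing to the normalization if $X$ is singular).

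For the inductive step, assume the theorem in all dimensions less than $n$, and let $X, D_1, \ldots, D_q$ satisfy the hypotheses with $\dim X = n$. The count $q \geq 2n + r \geq n + r - h^1(X, \O_X) + 1$ makes Theorem \ref{Vojta2} applicable, so any set $R$ of $(D, S)$-integral points is Zariski non-dense and hence contained in some proper closed $Y \subsetneq X$. After enlarging $k$ and $S$ (which is harmless) we may assume $Y$ is defined over $k$, and it suffices to show that $R \cap Y'$ is finite for each irreducible component $Y'$ of $Y$, say of dimension $d < n$. If $Y' \subseteq \Supp D_i$ for some $i$, then $R \cap Y' \subseteq (X \setminus D) \cap Y' = \emptyset$ and there is nothing to prove. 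Otherwise each restriction $D_i|_{Y'}$ is a well-defined ample effective Cartier divisor on $Y'$; the rank $r'$ of the subgroup generated by the $[D_i|_{Y'}]$ in $\NS Y'$ satisfies $r' \leq r$ by functoriality of $\NS$ under pullback; and the count is slack by at least two,
\[
q \geq 2n + r \geq 2(d+1) + r \geq 2d + r' + 2,
\]
which exceeds the inductive threshold $2d + r'$. Integrality transfers as well: an $X$-integral point lying on $Y'$ is $(D|_{Y'}, S)$-integral on $Y'$.

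The main obstacle is that general position of $D_1, \ldots, D_q$ on $X$ does not automatically pass to general position of the $D_i|_{Y'}$ on $Y'$; an intersection $\bigcap_{i\in I}\Supp D_i$ of codimension exactly $|I|$ in $X$ can meet $Y'$ improperly. My plan to address this is to exploit the slack $q - (2d+r') \geq 2$ to discard a bounded number of ``defective'' divisors whose restrictions spoil general position on $Y'$, or, using ampleness, to replace each offending $D_i$ by a numerically equivalent effective divisor meeting $Y'$ properly (which preserves $r'$ and the heights in the integral-point arguments). Carrying out this combinatorial/geometric bookkeeping tightly enough that the net loss in the count is at most $r - r' + 2$ is the delicate point; everything else then follows formally from Vojta's theorem and the inductive hypothesis.
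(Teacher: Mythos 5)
The paper does not prove this statement; it is quoted as a black box from Noguchi and Winkelmann's book (Theorem 9.7.6 of \cite{NW}), so there is no internal proof to compare against. I will therefore assess your argument on its own terms.

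The base case and the reduction via Theorem \ref{Vojta2} are sound, and the inductive framework (pass to an irreducible component $Y'$ of the Zariski closure, restrict the divisors, invoke the count $q\geq 2d+r'+2$) is a natural thing to try. But you have explicitly left the decisive step unproved, and I think this is a genuine gap rather than deferred bookkeeping. Neither of the two repairs you sketch actually closes it. First, replacing an offending $D_i$ by a numerically equivalent effective divisor $D_i'$ that meets $Y'$ properly does not preserve integrality: a set of $(D_i,S)$-integral points is in general not a set of $(D_i',S)$-integral points, because integrality is a property of the divisor itself (its support and local equations), not of its class in $\NS X$ or $\Pic X$; heights are preserved up to $O(1)$, but proximity functions are not. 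Second, the discarding strategy relies on the premise that at most two divisors can be ``defective,'' and this is false in general: the number of indices $i$ for which $D_i|_{Y'}$ spoils general position is not bounded by anything independent of $q$. Concretely, take $X=\mathbb{P}^3$, $D_1,\ldots,D_q$ hyperplanes in general position (so $r=1$, $r'=1$), and let $Y'$ be a surface containing several of the lines $L_{ij}=\Supp D_i\cap \Supp D_j$. For every such contained line, $D_i|_{Y'}$ and $D_j|_{Y'}$ share a whole curve, so general position already fails at $|I|=2$, and a surface can contain many such lines $L_{ij}$ for disjoint index pairs. Since the integral points you need to control live on $Y'$, you are not free to choose $Y'$ generically; it is precisely the non-generic $Y'$ (the ones rich in special subvarieties) that threaten to carry infinitely many integral points. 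Absent a mechanism to either bound the defect uniformly, or to circumvent the general-position hypothesis in the inductive application, the argument does not go through.
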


It should be pointed out that we have stated the above three theorems in terms of ranks associated to the given divisors $D_1,\ldots, D_q$, while these results are mostly stated  in the literature in terms of absolute invariants (e.g., the Picard number) which are independent of the given divisors.

In this note, we initiate the study of arithmetic (quasi-)hyperbolicity in the context of nef divisors.  From one point of view, our main result is in the vein of Theorems \ref{Vojta1}--\ref{NW}, with the rank replaced by an appropriate analogous quantity involving the number of generators of the cone in the real N\'eron-Severi vector space generated by the divisors $D_i$.  From another point of view, as discussed below, the main result goes towards a version of Conjecture \ref{LevConj} for nef divisors.  We now state the main result, yielding (quasi-)hyperbolicity statements under weak positivity assumptions on the divisors.  We use $\equiv$ to denote numerical equivalence of integral as well as $\QQ$- and $\RR$-divisors (see \cite[Ch.\ 1.3]{PAGI}).

\begin{theorem}
\label{mthm}
Let $X$ be a projective variety, defined over a number field $k$, of dimension $n$.  Let $E_1,\ldots, E_r$ be nef Cartier divisors on $X$ with $\sum_{j=1}^rE_j$ ample.  Let $D_1,\ldots, D_q$ be non-zero effective (possibly reducible) Cartier divisors in general position on $X$ and let $D=\sum_{i=1}^qD_i$. Suppose that $D_i\equiv \sum_{j=1}^ra_{i,j}E_j$, $i=1,\ldots, q$, where the coefficients $a_{i,j}$ are non-negative real numbers.  Let $P_i=(a_{i,1},\ldots, a_{i,r})\in \mathbb{R}^r$, $i=1,\ldots, q$.    Assume that for any proper subset $T$ of the set of standard basis vectors $\{e_1,\ldots, e_r\}\subset\mathbb{R}^r$, at most $(\#T)\left\lfloor\frac{q}{r}\right\rfloor$ of the vectors $P_1,\ldots, P_q$ are supported on $T$.  
\begin{enumerate}
\item If
\begin{align*}
q&\geq r(n+1)+1, && r=1,2,\\
q&\geq r(n+1)+\frac{(r-1)(r-2)}{2}, && r\geq 3,
\end{align*}
then $X\setminus D$ is arithmetically quasi-hyperbolic.
\label{mthma}
\item If
\begin{align*}
q\geq 2nr+r^2,
\end{align*}
then $X\setminus D$ is arithmetically hyperbolic.
\label{mthmb}
\end{enumerate}
\end{theorem}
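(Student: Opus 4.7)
The plan is to deduce the theorem from the divisorial case of the authors' recently established generalization of Schmidt's subspace theorem, applied with a carefully chosen ample gauge divisor $A$, in the spirit of the proofs of Theorems \ref{Autissier}--\ref{NW}. Since $\sum_j E_j$ is ample and the $E_j$ are nef, every $A_c := \sum_{j=1}^r c_j E_j$ with $c = (c_1, \ldots, c_r)$ in the open simplex $\Delta_r^\circ = \{c_j > 0, \sum_j c_j = 1\}$ is ample, and nefness of the $E_j$ lets one bound intersection numbers of the $D_i$ against powers of $A_c$ from above by the linear forms $P_i \cdot c$ (up to normalization by $\sum_j c_j$).

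The first step is to translate the Schmidt-type hypothesis into a numerical inequality of the form
\[
q \;>\; \kappa_n \cdot \max_{1 \leq i \leq q} (P_i \cdot c),
\]
where $\kappa_n$ is controlled by $n+1$ in the quasi-hyperbolic regime and by $2n$ in the hyperbolic regime, via a suitable Seshadri/beta-type constant of $A_c$; satisfying this inequality for some admissible $c$ delivers the corresponding degeneracy of $(D,S)$-integral points.

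The second step, and the substance of the argument, is to solve the minimax $m^* := \min_{c \in \Delta_r} \max_i (P_i \cdot c)$, refined lexicographically: after minimizing the largest value of $P_i \cdot c$, further minimize among minimizers the second largest, etc. Compactness yields a lex-minimizer $c^*$. The combinatorial hypothesis is exactly what certifies that $c^* \in \Delta_r^\circ$ (so $A_{c^*}$ is genuinely ample): if $c^*$ lay on the face $F_T$ corresponding to a proper $T \subsetneq \{e_1, \ldots, e_r\}$, only the $P_i$ supported on $T$ would actively determine the maximum of $P_i \cdot c^*$, and the hypothesis caps their number at $(\#T)\lfloor q/r\rfloor$; an averaging/perturbation argument then shows that moving $c^*$ slightly off $F_T$ lex-decreases the objective, contradicting optimality. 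A recursive application of this reasoning along the face lattice of $\Delta_r$, tracking the lex-levels, is what accounts for the correction $\frac{(r-1)(r-2)}{2}$ appearing in part (a) for $r \geq 3$ and the $r^2$ in part (b).

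Combining the two steps, the Schmidt inequality $q > \kappa_n m^*$ becomes the concrete numerical condition $q \geq r(n+1)+1$ (respectively $\geq r(n+1)+\frac{(r-1)(r-2)}{2}$) for quasi-hyperbolicity, and $q \geq 2nr + r^2$ for hyperbolicity; at the interior lex-optimum the averaging bound $m^* \leq \tfrac{1}{q}\sum_i (P_i \cdot c^*)$ combined with $\sum_i P_i \cdot c^* = \sum_j c_j^* \sum_i a_{i,j}$ yields the desired estimates after a direct computation. The main obstacle I anticipate is this final bookkeeping: matching the lex-minimax on boundary faces of $\Delta_r$ precisely against the thresholds $(\#T)\lfloor q/r\rfloor$, and extracting from the recursion the exact secondary corrections stated in the theorem.
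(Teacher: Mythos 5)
Your proposal correctly identifies the framework: apply the divisorial case of the authors' Schmidt-type theorem (Theorem \ref{Cor2}) with an ample divisor $A$ of the form $\sum_j b_j E_j$ chosen via a lexicographical minimax in the cone spanned by the $E_j$, with the combinatorial hypothesis preventing the optimum from degenerating to a boundary face. That much matches the paper's strategy for part~(a). But the specific objective you propose to lex-minimize is not the right one, and this is a genuine gap rather than a bookkeeping issue. You propose minimizing the linear functional $\max_i (P_i\cdot c)$; the paper instead lex-minimaxes the sorted tuple of \emph{counting} functions $\bigl(n_1(Q),\ldots,n_r(Q)\bigr)$, where $n_j(Q)$ is the number of indices $i$ for which $\min_l b_l/a_{i,l}$ is achieved at $l=j$. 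This counting is what actually controls the two inequalities Theorem \ref{Cor2} requires: taking $c_i=\min_l b_l/a_{i,l}$ makes $A-c_iD_i$ nef automatically, and the $E_j$-coefficient of $\sum_i c_iD_i$ is bounded below by $n_j(Q)\cdot b_j$ (since each $i$ with minimum at $j$ contributes exactly $b_j$). The threshold $q\geq r(n+1)+\frac{(r-1)(r-2)}{2}$ then comes from an integrality argument: at the lex-optimum one shows $n_j-n_{j+1}\in\{0,1\}$ for the sorted $n_j$, so $n_r\geq q/r-(r-1)/2>n$ forces $n_r\geq n+1$. I do not see how $\max_i(P_i\cdot c)$ yields comparable control, and your ``averaging bound'' $m^*\leq \frac{1}{q}\sum_i (P_i\cdot c^*)$ is in the wrong direction (a max dominates an average, not the reverse), so the final computation would not close.

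For part (b) the paper takes a route entirely absent from your proposal: Lemma \ref{thmblem} (proved by induction on $r$) groups the indices into $\lfloor q/r\rfloor$ pairwise-disjoint $r$-element sets whose corresponding sums $\sum_{i\in I_m}D_i$ are ample effective divisors in general position, and then one invokes Noguchi--Winkelmann's Theorem \ref{NW} directly. This greedy grouping, not a lex-minimax, is what produces the bound $q\geq 2nr+r^2$ (via $\lfloor q/r\rfloor\geq 2n+r$). Your proposal also omits the perturbation needed to satisfy the genericity hypothesis \eqref{generic} of the combinatorial lemma (distinctness of ratios $a_{i,j}a_{i',j'}$ vs.\ $a_{i,j'}a_{i',j}$), which the paper handles by slightly enlarging the $D_i$ by small nef pieces $B_i(\kappa)$ before passing to the minimax. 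Without replacing the objective by the counting functions $n_j(Q)$, supplying the perturbation step, and using the separate Noguchi--Winkelmann reduction for (b), the argument cannot be completed as outlined.
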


Let $\mathcal{C}$ be the convex cone generated by the numerical equivalence classes of $E_1,\ldots, E_r$ in the real N\'eron-Severi vector space.  Then the classes of the divisors $D_i$ lie in $\mathcal{C}$, and the condition that $\sum_{j=1}^rE_j$ is ample is equivalent to the convex cone $\mathcal{C}$ containing an ample class. The condition involving the supports of the vectors $P_i$ in terms the standard basis of $\RR^r$ ensures that the classes of the divisors $D_i$ are sufficiently ``spread out" in the cone $\mathcal{C}$.  Some such condition is necessary to avoid counterexamples such as Example~\ref{P1squared} in Section \ref{examples}, where all of the numerical equivalence classes of the divisors are multiples of some non-ample class.

In view of Theorem \ref{mthm} and the results of Section \ref{proofb}, it seems reasonable to conjecture the following analogue of Conjecture \ref{LevConj}:

\begin{conjecture}
\label{conj2}
Assume the hypotheses of Theorem \ref{mthm}.
\begin{enumerate}
\item If
\begin{align*}
q\geq  r(n+1)+1,
\end{align*}
then $X\setminus D$ is arithmetically quasi-hyperbolic.
\label{conja}
\item If
\begin{align*}
q\geq  2nr+1,
\end{align*}
then $X\setminus D$ is arithmetically hyperbolic.
\label{conjb}
\end{enumerate}
\end{conjecture}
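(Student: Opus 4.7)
The plan is to refine the proof of Theorem~\ref{mthm}, which combines the divisorial generalization of Schmidt's subspace theorem (advertised in the abstract) with a lexicographical minimax choice of auxiliary ample divisor in $\mathcal{C}$. Very roughly, one constructs a filtration of $H^0(X,mA)$ for some $\RR$-divisor $A \in \mathcal{C}$ adapted to $D_1,\ldots,D_q$ and applies the subspace theorem to the associated Weil functions $\lambda_{D_i,v}$; the effective weight with which each $D_i$ contributes is governed by an asymptotic intersection of $A$ against $D_i$, and the minimax step picks $A$ to maximize the worst such weight. The terms $(r-1)(r-2)/2$ and $r^2$ in Theorem~\ref{mthm} represent the slack left over in that optimization, and the conjecture asks us to shave them off.

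For part~\eqref{conja}, Theorem~\ref{mthm} already proves the conjectured bound whenever $r \leq 3$ (the excess $(r-1)(r-2)/2$ equals $0$ or $1$ there), so the real task is $r \geq 4$. I would pursue a sharper minimax: partition $\{1,\ldots,q\}$ according to which face of the cone generated by $e_1,\ldots,e_r$ contains $P_i$, then take $A = \sum_j c_j E_j$ with coefficients $c_j$ tuned face-by-face to equalize contributions. The distributional hypothesis on proper subsets $T \subset \{e_1,\ldots,e_r\}$ is precisely the combinatorial input that makes such equalization possible, since it forbids the $P_i$ from clustering on any single face. The goal is to recast this as a linear-programming bound whose optimum avoids the $(r-1)(r-2)/2$ overhead.

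For part~\eqref{conjb}, the gap between $q \geq 2nr+r^2$ and the conjectured $q \geq 2nr+1$ grows quadratically in $r$, so two complementary strategies appear necessary. First, import Autissier's sharpened $\QQ$-divisor estimate \cite[Lemme 4.2]{Aut09}---the ingredient responsible for the improvement from $q\geq 2n^2+1$ to $q\geq 2n^2$ in Theorem~\ref{Levin2}---into the nef setting, which should shave a linear-in-$r$ amount from $r^2$. Second, iterate: use the quasi-hyperbolicity conclusion of part~\eqref{mthma} on $X$ to confine integral points to a proper closed subset $Y \subset X$, and then apply induction on $\dim X$ to the components of $Y$ with the restricted divisors, mirroring the passage from Theorem~\ref{Autissier} to Theorem~\ref{Levin2}. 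Some care is required to preserve the general-position and distributional hypotheses upon restriction.

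The principal obstacle is that Conjecture~\ref{conj2} is expected to be genuinely hard: for $r=1$ it is settled by Theorem~\ref{mthm}, but for $r\geq 2$ the current tools consistently produce extra terms at exactly the places where the two strategies above should cancel---any sharpening of the minimax seems to require strictly more information than the distributional hypothesis alone supplies. Moreover, a proof of part~\eqref{conjb} in full would yield new cases of Conjecture~\ref{LevConj}\eqref{conjpartb} (for configurations of divisors that can be grouped into nef classes with ample sum), which is known only under the substantially weaker $q \geq 2n^2$ of Theorem~\ref{Levin2}. A realistic target of an attempted proof is therefore to replace the $r^2$ by $O(r)$, or to reach $r(n+1)+1$ in part~\eqref{conja} unconditionally, rather than attain the conjectured $+1$ in one stroke.
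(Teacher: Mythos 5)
The statement you were asked to prove is a \emph{conjecture} in the paper, not a theorem: the authors state it, prove it only in special cases via Theorem~\ref{mthm} (part~(\ref{conja}) when $r\leq 3$, both parts when $r=1$), and explicitly leave it open. Your proposal correctly senses this---you do not actually claim to prove anything, and you conclude that ``a realistic target'' is a partial improvement. Since there is no proof of this statement in the paper, there is strictly speaking nothing to compare against; but the honest diagnosis is that your proposal is a strategy sketch, not a proof, and it would not compile into an argument as written.

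Two more specific remarks. For part~(\ref{conja}), your plan to ``recast this as a linear-programming bound whose optimum avoids the $(r-1)(r-2)/2$ overhead'' is a paraphrase of the authors' own stated hope: immediately after the conjecture they observe that it would follow if Lemma~\ref{mainl} could be strengthened to $n_j(Q,P_1,\ldots,P_q)\geq \lfloor q/r\rfloor$ for all $j$, and they describe proving this as ``a surprisingly difficult combinatorial problem.'' You have restated the difficulty in slightly different language but supplied no new combinatorial input; the gap is precisely that improved inequality, and your proposal does not close it. For part~(\ref{conjb}), the two strategies you name (importing a sharpened Autissier-type estimate, then an induction on $\dim X$) are the tools used to go from Theorem~\ref{Autissier} to Theorem~\ref{Levin2}, and adapting them na\"{\i}vely to the nef setting is exactly what the paper already does in Section~3 to get Theorem~\ref{mthm}(\ref{mthmb}) and Theorem~\ref{mthm2}; there is no indication in the paper, and no evidence in your proposal, that the same route can reach $q\geq 2nr+1$. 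Moreover, Example~\ref{conjbex} shows that bound is sharp if the conjecture is true, so there is no slack to absorb an error term. The correct conclusion is that this statement remains an open conjecture, both in the paper and after your proposal.
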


We show in Example \ref{conjbex} in Section \ref{examples} that the inequality in part \eqref{conjb} of the conjecture is best possible. We are not sure if the inequality in part (a) of the conjecture is best possible; however, in Example \ref{conjaex} we show that in general $r(n+1)$ cannot be replaced by anything better than $r(n-r+2)$.\par
Observe that Theorem \ref{mthm}\eqref{mthma} proves Conjecture \ref{conj2}\eqref{conja} when $r\leq 3$. Note that in Theorem \ref{mthm}\eqref{mthma}, despite the identity $(3-1)(3-2)/2 =1$, we have grouped the case $r=3$ together with the general case as the general method of proof starts to apply from $r=3$ onwards, with the cases $r=1,2$ being easy specializations of the general argument.  In general, we may view Theorem \ref{mthm} as approximating Conjecture \ref{conj2}, with the inequalities involving an ``error term" depending only on $r$. For arbitrary $r$, we suspect that Lemma \ref{mainl} in the next section holds true with a stronger conclusion (namely, $n_j(Q,P_1,\ldots, P_q)\geq \lfloor \frac{q}{r}\rfloor$ for $j=1,\ldots, r$) which would yield Conjecture \ref{conj2}\eqref{conja}. However, proving such improved inequalities seems to be a surprisingly difficult combinatorial problem. \par
When $r$ is large compared to the dimension $n$, we are able to obtain the following better bound.

\begin{theorem}
\label{mthm2}
Assume the hypotheses of Theorem \ref{mthm} and that $n\geq 2$.
\begin{enumerate}
\item If $X$ is Cohen-Macaulay and $q\geq 2nr$, then $X\setminus D$ is arithmetically quasi-hyperbolic.
\item If $q\geq 2n^2r$, then $X\setminus D$ is arithmetically hyperbolic.
\end{enumerate}
\end{theorem}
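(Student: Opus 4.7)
The strategy is to reduce Theorem \ref{mthm2} to Theorems \ref{Autissier} and \ref{Levin2} by packaging the divisors $D_1,\ldots,D_q$ into $m:=2n$ (resp.\ $m:=2n^2$) ample effective Cartier divisors in general position on $X$.

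The key step is combinatorial: writing $A_j=\{i:a_{i,j}>0\}\subseteq\{1,\ldots,q\}$, I want to partition $\{1,\ldots,q\}$ into $m$ non-empty subsets $I_1,\ldots,I_m$ with $I_l\cap A_j\neq\emptyset$ for every $l$ and $j$ (a polychromatic $m$-coloring of the hypergraph on $\{1,\ldots,q\}$ with edges $A_1,\ldots,A_r$). The bound on $q$ ensures $\lfloor q/r\rfloor\geq m$, and applying the support hypothesis to $T=\{e_1,\ldots,e_r\}\setminus\{e_j\}$ already yields $|A_j|\geq q-(r-1)\lfloor q/r\rfloor\geq m$. More importantly, the full support condition, by precisely preventing concentration of the $P_i$ on proper coordinate subspaces, is what should supply the balance needed to produce the partition via a Hall-type or greedy argument. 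Establishing this combinatorial lemma is the main obstacle.

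Given such a partition, set $F_l=\sum_{i\in I_l}D_i$. Then $F_l\equiv\sum_{j=1}^r b_{l,j}E_j$ with $b_{l,j}=\sum_{i\in I_l}a_{i,j}>0$ for every $j$, so for any $0<\varepsilon<\min_j b_{l,j}$ we may decompose
\[
F_l\equiv\varepsilon\sum_{j=1}^r E_j+\sum_{j=1}^r(b_{l,j}-\varepsilon)E_j,
\]
writing $F_l$ as an ample class plus a nef class, whence $F_l$ is ample. General position of the $F_l$ is inherited from that of the $D_i$: since $\Supp F_l=\bigcup_{i\in I_l}\Supp D_i$, for $J\subseteq\{1,\ldots,m\}$ with $|J|\leq n+1$,
\[
\bigcap_{l\in J}\Supp F_l=\bigcup_{(i_l)\in\prod_{l\in J}I_l}\bigcap_{l\in J}\Supp D_{i_l},
\]
a union of closed sets each of codimension at least $|J|$ by general position of the $D_i$.

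Since $D=\sum_{l=1}^m F_l$, applying Theorem \ref{Autissier} to $F_1,\ldots,F_{2n}$ (using that $X$ is Cohen-Macaulay and $n\geq 2$) yields part (a), and applying Theorem \ref{Levin2} to $F_1,\ldots,F_{2n^2}$ yields part (b).
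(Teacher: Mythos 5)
Your overall strategy matches the paper's: group the $D_i$ into sums $F_l = \sum_{i\in I_l} D_i$ whose numerical classes are positive linear combinations of all the $E_j$ (hence ample, since $\sum_j E_j$ is ample and each $E_j$ is nef), check that general position is inherited, and then invoke Theorems \ref{Autissier} and \ref{Levin2}. Those parts of your argument are correct. The gap is precisely where you flag it: you have not established the combinatorial lemma, and the inequality $|A_j|\geq m$ alone is far from sufficient for the polychromatic coloring you want (it is a Hall-type necessary condition, not a sufficient one, and the naive greedy approach can get stuck).

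The paper supplies exactly this missing piece as Lemma~\ref{thmblem}, in a form slightly different from, but closely related to, yours: under the support hypothesis there exist $\lfloor q/r\rfloor$ \emph{pairwise disjoint} subsets $I_1,\ldots, I_{\lfloor q/r\rfloor}\subset\{1,\ldots,q\}$, each of cardinality exactly $r$, such that $\sum_{i\in I_j}P_i$ has all coordinates positive. (Your partition version follows at once by taking $m\leq\lfloor q/r\rfloor$ of these and dumping the leftover indices into the last block; but in fact a partition is unnecessary, because one can use the easy transfer fact that if $X\setminus E$ is arithmetically (quasi-)hyperbolic and $\Supp E\subset\Supp D$, then so is $X\setminus D$.) The paper's proof is an induction on $r$. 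After reducing to the case $r\mid q$, project $\RR^r\to\RR^{r-1}$ onto the first $r-1$ coordinates. The support hypothesis with $T=\{e_1,\ldots,e_{r-1}\}$ shows at most $q/r$ of the $P_i$ project to $0$, so at least $q' := q(r-1)/r$ have nonzero projection; applying the hypothesis with $t=r-1$ shows at most $q'$ have vanishing last coordinate. After reindexing one may assume $\pi(P_1),\ldots,\pi(P_{q'})\neq 0$ while $P_{q'+1},\ldots,P_q$ have positive last coordinate. Noting $q'/(r-1)=q/r$, the inductive hypothesis applies to $\pi(P_1),\ldots,\pi(P_{q'})$ in $\RR^{r-1}$ and yields $q/r$ disjoint $(r-1)$-element subsets; appending to the $j$-th of these the single index $q'+j$ produces the required $r$-element subsets. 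Without some argument of this kind -- and neither a plain Hall-type bound nor an unspecified greedy scheme suffices -- the proof is incomplete.
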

In Section 3, we will derive Theorem \ref{mthm}\eqref{mthmb} and Theorem \ref{mthm2} from Theorem \ref{Autissier},  Theorem \ref{Levin2}, and Theorem \ref{NW}.  The majority of the paper is devoted to the proof of Theorem \ref{mthm}\eqref{mthma}, which may be regarded as the primary new result.  Theorem \ref{mthm}\eqref{mthma} does not seem to na\"ively follow from previous results (and the method of Section \ref{proofb}), and in fact in certain cases gives a non-trivial improvement to Autissier's Theorem~\ref{Autissier}.  For instance, when $r\leq 4$, Theorem \ref{mthm}\eqref{mthma} implies Conjecture \ref{LevConj}\eqref{conjparta} when each ample divisor $D_i$ splits as a sum of $r$ non-zero effective nef divisors which satisfy, in totality, the hypotheses of Theorem \ref{mthm} (and when $r\geq 5$, Theorem \ref{mthm}\eqref{mthma} implies, under similar hypotheses,  arithmetic quasi-hyperbolicity on the complement of $q\geq  n+1+(r-2)/2$ ample effective divisors).  We discuss a further application of Theorem \ref{mthm}\eqref{mthma} in Example \ref{appex}.

The proof of Theorem \ref{mthm}\eqref{mthma} is based on the following result from our recent work \cite{HL17}.

\begin{theorem}
\label{Cor2}
Let $X$ be a projective variety of dimension $n$ defined over a number field $k$.  Let $S$ be a finite set of places of $k$.  Let $D_1,\ldots, D_q$ be effective Cartier divisors on $X$, defined over $k$, and in general position.  Let $A$ be an ample Cartier divisor on $X$, and $\epsilon>0$. Let $c_i$ be rational numbers such that $A-c_iD_i$ is a nef $\QQ$-divisor for all $i$. Then there exists a proper Zariski closed subset $Z\subset X$, independent of $k$ and $S$, such that for all but finitely many points $P\in X(k)\setminus Z$,
\begin{equation*}
\sum_{i=1}^q c_i m_{D_{i},S}(P)< (n+1+\epsilon)h_A(P).
\end{equation*}
\end{theorem}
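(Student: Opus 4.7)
The plan is to deduce the estimate from Schmidt's subspace theorem via a global-section filtration argument in the tradition of Corvaja--Zannier, Evertse--Ferretti, and Ru--Vojta. By clearing denominators and passing to a multiple of $A$, I may assume each $c_i$ is a non-negative integer: terms with $c_i \leq 0$ drop from the left side, since the local proximity functions $m_{D_i,S}(P)$ are bounded below.

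Fix a large integer $M$ and set $V_M = H^0(X,\mathcal{O}_X(MA))$, $N_M = \dim V_M \sim M^n(A^n)/n!$. For each $v \in S$ and each point $P$, the general-position assumption implies that at most $n$ of the $D_i$ pass through the $v$-adic specialization of $P$; call this index set $I(P,v)$. Filter $V_M$ by the joint subspaces $V_M(\beta) = \{s \in V_M : \ord_{D_i}s \geq \beta_i \text{ for all } i \in I(P,v)\}$ indexed by $\beta \in \mathbb{Z}_{\geq 0}^{I(P,v)}$, and pick a basis $\phi_1^{(v)},\ldots,\phi_{N_M}^{(v)}$ adapted to the induced complete flag. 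Since $A - c_iD_i$ is nef, concavity of the volume function on the big cone yields
\[
\mathrm{vol}(A - sD_i) \geq (1 - s/c_i)^n (A^n) \quad\text{for}\quad 0 \leq s \leq c_i.
\]
Integrating $h^0(X,\mathcal{O}_X(MA - tD_i))$ over $t$ using this bound gives, for each $i$ and $v$,
\[
\sum_{j=1}^{N_M}\ord_{D_i}(\phi_j^{(v)}) \;=\; \sum_{t\geq 1}h^0(X,\mathcal{O}_X(MA-tD_i)) \;\geq\; \frac{Mc_i N_M}{n+1} - o(MN_M),
\]
the factor $1/(n+1)$ arising from $\int_0^{c_i}(1-s/c_i)^n\,ds = c_i/(n+1)$.

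Next, apply Schmidt's subspace theorem to the hyperplanes cut out by the sections $\phi_j^{(v)}$ inside $\mathbb{P}(V_M)\cong\mathbb{P}^{N_M-1}$, pulled back to $X$ via the morphism associated to $|MA|$. This yields a proper Zariski-closed subset $Z \subset X$, independent of $k$ and $S$, such that for all but finitely many $P \in X(k)\setminus Z$,
\[
\sum_{v\in S}\sum_{j=1}^{N_M}\lambda_{\phi_j^{(v)},v}(P) \;\leq\; (1+\epsilon')N_M\,M\,h_A(P) + O(1).
\]
Expanding each $\lambda_{\phi_j^{(v)},v}(P) = \sum_{i\in I(P,v)}\ord_{D_i}(\phi_j^{(v)})\,\lambda_{D_i,v}(P) + O(1)$ (valid because the basis is adapted near $P$) and substituting the vanishing-order bound above, the left side dominates $\frac{MN_M}{n+1}\sum_i c_i m_{D_i,S}(P) - o(MN_M)h_A(P) - O(1)$. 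Dividing through by $MN_M/(n+1)$ and choosing $M$ large enough to absorb the residual errors into $\epsilon$ produces the claimed inequality.

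The main obstacle is the asymptotic volume computation in step two: extracting the sharp $1/(n+1)$ factor from the nef hypothesis alone (rather than bigness or ampleness of $A - c_iD_i$) requires either Lazarsfeld-style concavity of volumes or an Okounkov-body construction, together with continuity of volume at the boundary of the pseudo-effective cone. A secondary technical point is ensuring uniformity of the estimate as $P$ ranges over the finitely many strata of the general-position arrangement of the $D_i$; the general-position assumption is exactly what reduces this to a manageable finite case-check and rules out pathologies of the local filtration.
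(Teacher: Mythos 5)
The paper does not actually contain a proof of Theorem \ref{Cor2}: it is quoted from the authors' earlier article \cite{HL17}, where a stronger statement (for closed subschemes, with Seshadri-type constants) is established. What the present paper supplies is only a sketch of the lineage: the argument is in the Corvaja--Zannier/Evertse--Ferretti/McKinnon--Roth filtration tradition, with the exceptional set made independent of $k$ and $S$ via Vojta's refinement of the subspace theorem. Your outline matches that lineage, so the broad strategy is the right one, and several of the intermediate steps (the reduction to non-negative integral $c_i$, the final Schmidt-plus-pullback bookkeeping, the use of general position to bound $\#I(P,v)\le n$ and reduce to finitely many local configurations) are all in order.

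There is, however, a genuine gap at the heart of the argument, and you have misidentified where the difficulty lies. You write that the main obstacle is extracting the factor $1/(n+1)$ from the nef hypothesis; in fact that part is unproblematic. Since $A$ is ample and $A-c_iD_i$ is nef, the convex combination $A-sD_i=(1-s/c_i)A+(s/c_i)(A-c_iD_i)$ is ample for $0\le s<c_i$, so one is always in the ample cone and the bound $\mathrm{vol}(A-sD_i)\ge(1-s/c_i)^n(A^n)$ follows from the concavity of the self-intersection form on the nef cone (a much easier fact than Okounkov-body concavity of volume). The real obstacle is the displayed identity
\[
\sum_{j=1}^{N_M}\ord_{D_i}\bigl(\phi_j^{(v)}\bigr)\;=\;\sum_{t\ge 1}h^0\bigl(X,\mathcal O_X(MA-tD_i)\bigr).
\]
That equality is correct only when the basis $\phi_1^{(v)},\ldots,\phi_{N_M}^{(v)}$ is adapted to the single flag determined by vanishing along $D_i$ alone. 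You instead take a basis adapted to a flag refining the multi-parameter filtration $V_M(\beta)$, $\beta\in\mathbb Z_{\ge 0}^{I(P,v)}$. A multi-parameter filtration does not determine a complete flag --- one must pick a total order on $\beta$ (a direction/path in the parameter space) --- and for any such choice the resulting basis is a compromise between the flags for the various $D_i$ with $i\in I(P,v)$. In general one therefore only has $\sum_j\ord_{D_i}(\phi_j^{(v)})\le\sum_t h^0(MA-tD_i)$, and proving that a single basis can be found for which all of these sums are simultaneously at least $\frac{Mc_iN_M}{n+1}-o(MN_M)$ is precisely the combinatorial core of the argument. In the literature this is handled by a nontrivial lemma (Autissier's Lemme 4.2/Corollaire 4.3, the $\beta$-invariant computation of Ru--Vojta, or the Seshadri-constant estimates in \cite{HL17}), often by bounding a weighted combination of the $\ord_{D_i}$ rather than each separately. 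Your outline asserts this step as an identity without justification, so the proposal does not yet constitute a proof of Theorem \ref{Cor2}.
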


Here, $m_{D,S}(P)=\sum_{v\in S} \lambda_{D,v}(P)$ is a sum of local height functions $\lambda_{D,v}$, associated to the divisor $D$ and place $v$ in $S$, and $h_A$ is a global (absolute) height associated to $A$.

Theorem \ref{Cor2} may be viewed as a generalization of work of Evertse and Ferretti \cite{ef_festschrift} and Corvaja and Zannier \cite{CZ}, which dealt with the case when the divisors $A,D_1,\ldots, D_q$ have a common multiple up to linear equivalence (or work of the second author \cite{levin_duke} when the divisors have a common multiple up to numerical equivalence).  More generally, building on the work of Evertse and Ferretti \cite{ef_festschrift}, Corvaja and Zannier \cite{CZ}, McKinnon and Roth \cite{McK_R} and others, a version of Theorem \ref{Cor2} was proved in \cite{HL17} for closed subchemes (in place of divisors) and with the constants $c_i$ replaced by suitably-defined Seshadri constants.  The fact that $Z$ can be chosen independently of $k$ and $S$ in Theorem \ref{Cor2} (and its generalizations) relies on Vojta's result \cite{vojta_ajm_87} on the exceptional set in Schmidt's subspace theorem, and that the proof of Theorem \ref{Cor2} ultimately relies on an application of Schmidt's theorem.

The proof of Theorem \ref{mthm}\eqref{mthma} proceeds through Theorem \ref{Cor2}, and takes advantage of the freedom in choosing the ample divisor $A$ in Theorem \ref{Cor2}.  Roughly speaking, the idea of the proof of Theorem \ref{mthm}\eqref{mthma} is to choose an ample divisor $A$ in Theorem \ref{Cor2} whose image in the relevant convex cone $\mathcal{C}$ is centrally located relative to the classes of $D_1,\ldots, D_q$ in $\mathcal{C}$.  In practice, we achieve this by choosing an $A$ which achieves a certain lexicographical minimax.

Under the standard correspondence between statements in Diophantine approximation and Nevanlinna theory, there exist analogous degeneration statements for entire curves in Nevanlinna theory. This line of reasoning is by now well known and we omit the details. 

\section{Proof of Theorem \ref{mthm}\eqref{mthma}}

The proof of Theorem \ref{mthm}\eqref{mthma} is based on the following proposition.

\begin{proposition}\label{keyprop}
Let $X$ be a projective variety of dimension $n$ defined over a number field $k$.  Let $E_1,\ldots, E_r$ be nef Cartier divisors on $X$ with $\sum_{j=1}^rE_j$ ample. Let $D_1,\ldots, D_q$ be non-zero effective (possibly reducible) Cartier divisors in general position on $X$. Suppose that $D_i\equiv \sum_{j=1}^ra_{i,j}E_j$, $i=1,\ldots, q$, where the coefficients $a_{i,j}$ are non-negative real numbers.  Let $P_i=(a_{i,1},\ldots, a_{i,r})\in \mathbb{R}^r$, $i=1,\ldots, q$.    Assume that for any proper subset $T$ of the set of standard basis vectors $\{e_1,\ldots, e_r\}\subset\mathbb{R}^r$, at most $(\#T)\left\lfloor\frac{q}{r}\right\rfloor$ of the vectors $P_1,\ldots, P_q$ are supported on $T$. If 
\begin{align*}
q&\geq r(n+1)+1, && r=1,2,\\
q&\geq r(n+1)+\frac{(r-1)(r-2)}{2}, && r\geq 3,
\end{align*}
then there exist an ample divisor $A$ and positive rational constants $c_1,\ldots,c_q, \delta$ such that for all $i=1,\ldots, q$: 
\begin{align*}
A-c_iD_i&\text{ is $\QQ$-nef}
\end{align*}
and
\begin{align*}
\sum_{i=1}^q c_iD_i-(n+1+\delta)A &\text{ is  $\QQ$-nef.}
\end{align*}
\end{proposition}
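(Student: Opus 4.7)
I would look for an ample divisor of the form $A = \sum_{j=1}^{r}\alpha_j E_j$ with rational $\alpha_j > 0$, which is automatically ample: with $\alpha_{\min} = \min_j \alpha_j > 0$, $A = \alpha_{\min}\sum_j E_j + \sum_j(\alpha_j - \alpha_{\min})E_j$ is ample plus nef. Given such $\alpha$, set $c_i = c_i(\alpha) := \min\{\alpha_j/a_{i,j} : a_{i,j} > 0\}$. Then
\[
A - c_i D_i \equiv \sum_{j=1}^{r}(\alpha_j - c_i a_{i,j})E_j
\]
is a non-negative combination of the nef $E_j$, hence automatically $\QQ$-nef. Similarly
\[
\sum_{i=1}^{q} c_i D_i - (n+1+\delta)A \equiv \sum_{j=1}^{r}\Bigl(\sum_{i=1}^{q} c_i a_{i,j} - (n+1+\delta)\alpha_j\Bigr)E_j
\]
is $\QQ$-nef whenever every coefficient is $\geq 0$. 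Writing $\phi_j(\alpha) := \alpha_j^{-1}\sum_{i=1}^{q} c_i(\alpha)\, a_{i,j}$, the proposition therefore reduces to finding $\alpha\in\RR^r_{>0}$ with $\phi_j(\alpha) > n+1$ for every $j$; rationality of $\alpha$, $c_i$, and $\delta$ is then free, since the inequalities are open.

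Introduce weights $\lambda_{i,j}(\alpha) := c_i(\alpha)\, a_{i,j}/\alpha_j \in [0,1]$, so $\phi_j(\alpha) = \sum_i \lambda_{i,j}(\alpha)$. For each $i$ the defining minimum of $c_i$ is attained at some index $j(i)$ with $\lambda_{i,j(i)}=1$, so $\sum_j\phi_j(\alpha)\geq q$. The ideal situation would equalize $\phi_j \equiv q/r$ and yield the conjectural bound $q\geq r(n+1)+1$ directly. To come as close as possible to this, I would choose $\alpha^*$ by a lexicographical minimax on a compact normalization (say $\max_j\alpha_j=1$): maximize the vector $(\phi_j(\alpha))_j$ sorted in increasing order, lexicographically. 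Continuity and compactness, with some care at boundary points where some $\alpha_j\to 0$, guarantee existence of $\alpha^*$.

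The crux is a combinatorial lemma (Lemma~\ref{mainl} in the next section) whose role is to lower-bound $\phi_j(\alpha^*)$ for every $j$. The rigidity at $\alpha^*$ is that on the active set $J^* = \{j : \phi_j(\alpha^*) = \min_{j'}\phi_{j'}(\alpha^*)\}$, any local perturbation of $\alpha^*$ that raises some $\phi_j$ with $j\in J^*$ must lower another $\phi_{j'}$ with $j'\in J^*$; this links the supports of the $P_i$ to $J^*$. Combined with the hypothesis that at most $(\#T)\lfloor q/r\rfloor$ of the $P_i$ are supported on any proper $T\subset\{e_1,\ldots,e_r\}$, this controls the counts $n_j := \#\{i : j(i)=j\}$ and delivers $\phi_j(\alpha^*)\geq n_j$ with $n_j$ close to $\lfloor q/r\rfloor$. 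The conjecturally sharp bound $n_j\geq\lfloor q/r\rfloor$ for every $j$, augmented by the fractional cross-term contributions, would already yield Conjecture~\ref{conj2}(a); proving such improved counts appears to be a genuinely difficult combinatorial problem, and the weaker count actually obtainable is what produces the error term $(r-1)(r-2)/2$ in the proposition. This combinatorial step is the main obstacle.

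Once the lemma gives $n_j\geq n+2$ under the stated hypothesis on $q$, a rational perturbation of $\alpha^*$ together with any sufficiently small rational $\delta>0$ finishes everything. The cases $r=1,2$ are easy specializations: $r=1$ is trivial since $\phi_1=q\geq n+2$; and for $r=2$ one parametrizes $\alpha$ by the single ratio $x=\alpha_2/\alpha_1$, observes that $\phi_1(x)$ is non-decreasing and $\phi_2(x)$ non-increasing in $x$ with $\phi_1(x)+\phi_2(x)\geq q$, and uses the hypothesis on the number of $P_i$ supported on a single basis vector to locate a crossing $x^*$ at which $\phi_1(x^*)=\phi_2(x^*)\geq q/2>n+1$.
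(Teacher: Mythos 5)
Your plan is correct and matches the paper's approach: choose $A=\sum_j b_j E_j$ and $c_i=\min_j b_j/a_{i,j}$, which reduces the proposition to the combinatorial Lemma~\ref{mainl}, proved by precisely the lexicographical minimax you describe (the paper runs the minimax on the integer count vector $n(Q)$ rather than on $(\phi_j)$, but these amount to the same mechanism, as you note via $\phi_j\ge n_j$). The one step you glossed over is the perturbation $D_i\mapsto D_i+B_i(\kappa)$ that the paper introduces so the perturbed coefficients $a'_{i,j}(\kappa)$ are rational and satisfy the genericity hypothesis~\eqref{generic} of the lemma; this is a technical device rather than a change in strategy, but it is needed because the $a_{i,j}$ are only real and because the lemma's proof relies on the determinant non-vanishing condition.
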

Assuming Proposition \ref{keyprop}, the proof of Theorem \ref{mthm}\eqref{mthma} proceeds as follows.
\begin{proof}[Proof of Theorem \ref{mthm}(a)]
Let $A$, $c_1,\ldots, c_q$, and $\delta$ be as in the conclusion of Proposition~\ref{keyprop}. Let $\epsilon<\delta$ be a positive rational number.  First, note that
\begin{align*}
\sum_{i=1}^q c_iD_i-(n+1+\epsilon)A =  \sum_{i=1}^q c_iD_i-(n+1+\delta)A +(\delta-\varepsilon)A
\end{align*}
is an ample $\QQ$-divisor, as it is the sum of a nef $\QQ$-divisor (by Proposition \ref{keyprop}) and an ample $\QQ$-divisor.  Now, since $A-c_iD_i$ is $\QQ$-nef for all $i=1,\ldots,q$, by Proposition \ref{keyprop}, we may apply Theorem \ref{Cor2} to conclude that there exists a proper Zariski closed subset $Z\subset X$, independent of $k$ and $S$, such that for all $P\in X(k)\setminus Z$,
\begin{equation*}
\sum_{i=1}^q c_im_{D_i,S}(P)< (n+1+\epsilon)h_A(P).
\end{equation*}
Furthermore, if $R\subset X(k)$ is a set of $(D,S)$-integral points on $X$, then for $P\in R\setminus Z$,
\begin{equation*}
\sum_{i=1}^q c_im_{D_i,S}(P)=\sum_{i=1}^q c_ih_{D_i}(P)+O(1)< (n+1+\epsilon)h_A(P)+O(1).
\end{equation*}
Since $\sum_{i=1}^q c_iD_i-(n+1+\epsilon)A$ is $\QQ$-ample, by Northcott's theorem the inequality $\sum_{i=1}^q c_ih_{D_i}(P)< (n+1+\epsilon)h_A(P)+O(1)$ has only finitely many solutions $P\in X(k)$.  It follows that $R\setminus Z$ is finite.
\end{proof}
It remains to prove Proposition \ref{keyprop}. To this end, we establish the following lemma. Note that we naturally interpret division of a positive number by zero as (positive) infinity. 
\begin{lemma}
\label{mainl}
Let $P_i=(a_{i,1},\ldots, a_{i,r})\in \mathbb{R}^r\setminus \{0\}$, $i=1,\ldots, q$, be vectors with non-negative coordinates.  Let $e_j$, $j=1,\ldots, r,$ be the standard coordinate vectors.  Suppose that for any proper subset $T\subset \{e_1,\ldots, e_r\}$ of cardinality $t$, at most $t\left\lfloor\frac{q}{r}\right\rfloor$ of the vectors $P_i$ are supported on $T$.  For $Q=(b_1,\ldots, b_r)\in \mathbb{R}^r$ with positive coordinates, define
\begin{align*}
n_j(Q,P_1,\ldots, P_q)=\#\left\{i\in \{1,\ldots,q\}\mid \min_{l=1,\ldots,r} \frac{b_l}{a_{i,l}}=\frac{b_j}{a_{i,j}}\right\}, \quad j=1,\ldots, r.
\end{align*}
Assume additionally that for all $i\neq i',j\neq j'$, we have 
\begin{equation}\label{generic}
a_{i,j}a_{i',j'}-a_{i,j'}a_{i',j}\neq 0,
\end{equation}
unless both terms on the left are $0$.  Then there exists $Q=(b_1,\ldots, b_r)\in \mathbb{Q}^r$ with positive coordinates such that 
\begin{align*}
n_j(Q,P_1,\ldots, P_q)\geq \frac{q}{r}-\frac{r-1}{2}, \quad j=1,\ldots, r,
\end{align*}
and
\begin{align}
\left(\frac{1}{2}\min \frac{a_{i,j}}{a_{i',j'}}\right)^r\leq \frac{b_l}{b_{l'}}\leq \left(2\max \frac{a_{i,j}}{a_{i',j'}}\right)^r, \quad \text{ for all }l, l', \label{bineq}
\end{align}
where the minimum and maximum are taken over all $i,j,i',j'$ such that $a_{i,j}a_{i',j'}\neq 0$.
\end{lemma}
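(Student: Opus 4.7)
My plan is to find the required $Q$ by a lexicographic maximin argument on the bin-assignment function $\sigma_Q\colon \{1,\ldots,q\}\to\{1,\ldots,r\}$ defined at generic $Q$ by $\sigma_Q(i) = \arg\min_l b_l/a_{i,l}$, with the convention $b_l/0 = +\infty$. The genericity hypothesis (\ref{generic}) ensures that $\sigma_Q$ is well-defined and locally constant off a finite arrangement of walls $\{a_{i,j'}b_j = a_{i,j}b_{j'}\}$, and that transversely crossing a single wall flips exactly one value $\sigma_Q(i)$ between two bins $j$ and $j'$. Since the counts $n_j(Q)$ are scale-invariant, I would restrict attention to the slice $\prod_l b_l = 1$, further intersected with the compact subset compatible with the target ratio bound (\ref{bineq}).

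The heart of the argument is to select $Q^*$ in this set maximizing lexicographically the non-decreasing rearrangement $(n_{(1)}(Q),\ldots,n_{(r)}(Q))$ of the bin sizes (attained at a generic point of an open region where $\sigma_{Q^*}$ is constant, since only finitely many tuples occur), and then show $n_{(1)}(Q^*) \geq q/r - (r-1)/2$. Assuming for contradiction that $n_{j^*}(Q^*) < q/r - (r-1)/2$ for some $j^*$ achieving the min, I would exhibit a perturbation path along which $b_{j^*}$ decreases relative to the other coordinates; each crossed wall transfers one index between two bins, and the plan is to arrange enough net transfers into bin $j^*$ (without pushing any other bin below the prior minimum) to contradict lex-maximality. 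The support hypothesis is essential here: it rules out the only obstruction, namely that too many $P_i$'s are supported on a proper subset $T\subsetneq\{e_1,\ldots,e_r\}$ and thus cannot cast votes outside $T$; the bound $(\#T)\lfloor q/r\rfloor$ is precisely what prevents this deadlock. The slack $(r-1)/2$ emerges from the pairwise pigeonhole accounting across the $r-1$ bins other than $j^*$, amounting to an average ``half-swap'' loss that cannot be eliminated before the support condition takes effect.

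The ratio bound (\ref{bineq}) would be inherent to lex-optimality: if at $Q^*$ some $b_l/b_{l'}$ were outside the stated window, then for every $i$ with $a_{i,l}, a_{i,l'} > 0$ the minimum $\min_k b_k/a_{i,k}$ could not be attained at $k = l$, collapsing $n_l(Q^*)$ to only those indices with $a_{i,l'} = 0$; applying the support hypothesis to $T = \{e_1,\ldots,e_r\}\setminus\{e_{l'}\}$, together with a rescaling of $Q^*$ toward the interior, would contradict lex-maximality. The exponent $r$ arises by iterating this argument across all pairs $(l,l')$. Finally, rationality of $Q^*$ is recovered by a small perturbation within the open region of constancy of $\sigma_{Q^*}$, which preserves every strict inequality and every count $n_j$.

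The principal obstacle is the combinatorial swap argument in the second paragraph, particularly for $r\geq 3$: chains of wall crossings cascade across multiple bins, and the role of the support hypothesis must be tracked through these cascades to justify the precise $(r-1)/2$ slack. The authors explicitly note that improving the bound to $\lfloor q/r\rfloor$ (which would yield Conjecture \ref{conj2}\eqref{conja}) is a surprisingly difficult open combinatorial problem, so I would not expect to obtain the sharp inequality through this approach.
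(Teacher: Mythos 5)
Your overall strategy—optimize a sorted rearrangement of the bin counts lexicographically over a generic chamber decomposition, then derive a contradiction from a wall-crossing perturbation, invoking the support hypothesis to break deadlock—matches the paper's approach in spirit. The paper uses a lexicographical minimax on the sorted-\emph{decreasing} tuple, which for fixed sum $q$ is essentially equivalent to your maximin on the sorted-increasing tuple, and the genericity condition \eqref{generic} plays exactly the role you describe. So the framework is right. However, there is a genuine gap at the combinatorial heart of the argument, and you essentially acknowledge it.

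The missing idea is the choice of perturbation. You propose to decrease $b_{j^*}$ at the bin achieving the minimum, hoping to funnel net transfers into bin $j^*$. This does not obviously work: lowering $b_{j^*}$ does pull indices into bin $j^*$, but the first walls crossed may take those indices from \emph{other} small bins, not from the large ones, so the sorted tuple need not improve. The paper's resolution is structural: assume the coordinates are sorted so $n_1 \geq \cdots \geq n_r$ and let $j_0$ be the \emph{first index with a consecutive gap} $n_{j_0} - n_{j_0+1} \geq 2$. One then scales the entire initial block $(b_1,\ldots,b_{j_0})$ up by $\lambda \geq 1$. The support hypothesis, applied to $T=\{e_1,\ldots,e_{j_0}\}$, forces $\sum_{j\leq j_0} n_j > j_0\lfloor q/r\rfloor$, so as $\lambda\to\infty$ the block must eventually lose an index; the first such $\lambda$ (by \eqref{generic}) transfers exactly one index from some bin $j_1\leq j_0$ to some bin $j_2 > j_0$, and since every bin $\leq j_0$ exceeds every bin $> j_0$ by at least $2$, this strictly improves the sorted tuple—contradiction. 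One thereby obtains the clean structural fact $n_j-n_{j+1}\in\{0,1\}$ for all $j$, from which $n_r\geq q/r-(r-1)/2$ follows by summation; this is precisely where the $(r-1)/2$ slack comes from, not from a ``half-swap'' heuristic. Choosing the perturbation block by the first gap location rather than by the minimum is the key maneuver your proposal lacks.

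Your treatment of the ratio bound \eqref{bineq} also departs from the paper and, as stated, is not convincing. You restrict the domain to the ratio-bounded slice from the outset and then claim lex-optimality forces the optimum into the interior; but a constrained optimum can perfectly well sit on the boundary of a box, and showing otherwise would require essentially the same work over again. The paper instead proves the count bound first over all of $\AA$, and then post-processes: sorting the $b_l$, whenever some consecutive ratio $b_{l+1}/b_l$ exceeds $2\max a_{i,j}/a_{i',j'}$, it rescales $(b_{l+1},\ldots,b_r)$ down by a factor $\lambda<1$ and verifies, case by case on whether the minimizing index $j$ for a given $P_i$ is $\leq l$ or $>l$, that $n_j(Q')\geq n_j(Q)$ for every $j$. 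This monotonicity is what lets the ratio be compressed without sacrificing the count lower bound, and iterating over $l$ gives the exponent $r$ in \eqref{bineq}. Finally, rationality is, as you say, recovered by a small perturbation inside the open chamber—that part is fine.
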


\begin{proof}
To a point $Q=(b_1,\ldots, b_r)\in \mathbb{R}^r$ with positive coordinates, we associate the point $n(Q)=(n_1,\ldots, n_r)\in \mathbb{N}^r$, where $n_j=n_j(Q)=n_j(Q,P_1,\ldots, P_q)$, $j=1,\ldots, r$.  Let $\AA\subset\mathbb{R}^r$ be the subset of $Q=(b_1,\ldots, b_r)$ with positive coordinates such that 
\begin{enumerate}
\item The non-zero coordinates of the vector $(a_{i,1}/b_1,\ldots, a_{i,r}/b_r)$ are distinct for any fixed $i=1,\ldots, q$.\label{cond1}
\item The ratios of all distinct non-zero coordinates of $(a_{i,1}/b_1,\ldots, a_{i,r}/b_r)$ (over all $i$) are distinct.\label{cond2}
\end{enumerate}

Then $\AA$ is clearly an open subset of $\mathbb{R}^r$. By condition \eqref{generic}, $\AA$ is non-empty. The condition \eqref{cond1} ensures that for $Q\in \AA$, every point $P_i$ contributes to a unique $n_j(Q,P_1,\ldots, P_q)$.  In particular, for $Q\in \AA$,  $\sum_{j=1}^rn_j(Q,P_1,\ldots, P_q)=q$.\par  
We consider $\mathbb{N}^r$ with the usual lexicographical ordering.  Let $Q\in \AA$ be such that it realizes the lexicographical minimax 
$$\min_{P\in \AA}\ \max\{\sigma(n(P)): \sigma \in S_r\},$$
where $S_r$ is the symmetric group on $r$ letters.  After permuting the coordinates, we can assume without loss of generality that $n(Q)=(n_1,\ldots, n_r)$ satisfies $n_1\geq n_2\geq \ldots \geq n_r$. \par

We claim that $n_j-n_{j+1}\in \{0,1\}$ for $1\leq j\leq r-1$.  Suppose otherwise, and let $j_0$ be the smallest index such that $n_{j_0}-n_{j_0+1}\geq 2$.  We consider the family of points
\begin{align*}
Q_\lambda=(\lambda b_1,\ldots, \lambda b_{j_0}, b_{j_0+1},\ldots, b_r)=(b_{\lambda, 1},\ldots, b_{\lambda,r}), \quad \lambda \geq 1.
\end{align*}

By assumption, there are at most $j_0\left\lfloor\frac{q}{r}\right\rfloor$ vectors $P_i$ supported on $e_1,\ldots, e_{j_0}$.  Since $\sum_{j=1}^{j_0}n_j>j_0\left\lfloor\frac{q}{r}\right\rfloor$, this implies that for some $\lambda>1$, $n(Q_\lambda)\neq n(Q)$.  Condition \eqref{cond1} implies that there is a minimal such value $\lambda>1$.  From the form of $Q_\lambda$ and condition \eqref{cond2}, for this value of $\lambda$ there is a unique $j_1\leq j_0$, $j_2>j_0$, and $i$ such that
\begin{align*}
\min_{l=1,\ldots,r}  \frac{b_{\lambda,l}}{a_{i,l}}=\frac{\lambda b_{j_1}}{a_{i,j_1}}=\frac{ b_{j_2}}{a_{i,j_2}}.
\end{align*}
Then for sufficiently small $\epsilon>0$, $Q_{\lambda+\epsilon}\in \AA$, $n_{j_1}(Q_{\lambda+\epsilon})=n_{j_1}(Q)-1$, $n_{j_2}(Q_{\lambda+\epsilon})=n_{j_2}(Q)+1$, and $n_{j}(Q_{\lambda+\epsilon})=n_{j}(Q)$ if $j\not\in \{j_1,j_2\}$.  Since $n_{j_1}-n_{j_2}\geq n_{j_0}-n_{j_0+1}\geq 2$, this implies that
\begin{align*}
\max\{\sigma(n(Q_{\lambda+\epsilon})): \sigma \in S_r\}< n(Q),
\end{align*}
contradicting the definition of $Q$ and proving the claim.

Now, we note that $n_j-n_{j+1}\in \{0,1\}$ for $1\leq j\leq r-1$ implies the inequalities
\begin{align*}
rn_r\leq q=\sum_{j=1}^rn_j\leq rn_r+\frac{r(r-1)}{2}.
\end{align*}
The last inequality implies 
$$(n_1\geq n_2\geq \ldots \geq)\ n_r\geq  \frac q r - \frac{r-1}{2}.$$
Due to the condition (a) imposed on the set $\AA$, it is clear that we may replace $Q$ by a sufficiently close point with rational coefficients and maintain the above chain of inequalities. Lemma \ref{mainl} is now proven except for the bounds \eqref{bineq}.  By symmetry, it suffices to prove that we can choose $Q=(b_1,\ldots, b_r)$ satisfying
\begin{align*}
\frac{b_l}{b_{l'}}\leq \left(2\max \frac{a_{i,j}}{a_{i',j'}}\right)^r \quad \text{ for all }l, l',
\end{align*}
where the maximum is over all $i,j,i',j'$ such that $a_{i,j}a_{i',j'}\neq 0$.  Let $Q=(b_1,\ldots, b_r)$ be one choice of $Q$ satisfying the lemma except for possibly the inequality \eqref{bineq}.  For simplicity, after reindexing, we may assume that $0<b_1\leq b_2\leq \ldots \leq b_r$.  Suppose that for some index $l$,
\begin{align*}
\frac{b_{l+1}}{b_l}>2\max \frac{a_{i,j}}{a_{i',j'}}.
\end{align*}
Let $\lambda$ be a rational number satisfying
\begin{align*}
\frac{b_l}{b_{l+1}}\max \frac{a_{i,j}}{a_{i',j'}}< \lambda < 2\frac{b_l}{b_{l+1}}\max \frac{a_{i,j}}{a_{i',j'}}<1,
\end{align*}
and let
\begin{align*}
Q'=(b_1',\ldots, b_r')=(b_1,\ldots, b_l, \lambda b_{l+1},\lambda b_{l+2},\ldots, \lambda b_r).
\end{align*}
Note that $Q'$ again has positive rational coordinates. We claim that
\begin{align*}
n_j(Q,P_1,\ldots, P_q)\leq n_j(Q',P_1,\ldots, P_q), \quad j=1,\ldots, r.
\end{align*}

Let $j\in \{1,\ldots, r\}$ and $i\in \{1,\ldots,q\}$ be such that
\begin{align*}
\min_{m=1,\ldots,r} \frac{b_m}{a_{i,m}}=\frac{b_j}{a_{i,j}}.
\end{align*}
In particular, $a_{i,j}\neq 0$.  Suppose first that $j\leq l$.  For $m\geq l+1$ we have
\begin{align*}
\frac{b_m'}{b_j'}=\frac{\lambda b_m}{b_j} > \max \frac{a_{i',j'}}{a_{i,j}}.
\end{align*}
For $m\leq l$ we have
\begin{align*}
\frac{b_m'}{b_j'}=\frac{ b_m}{b_j}.
\end{align*}
It follows that
\begin{align*}
\min_{m=1,\ldots,r} \frac{b_m'}{a_{i,m}}=\frac{b_j'}{a_{i,j}}=\frac{b_j}{a_{i,j}}.
\end{align*}

Suppose now that $j\geq l+1$.  Let $m\leq l$.  If $a_{i,m}\neq 0$, then
\begin{align*}
\frac{b_m}{b_j}< \frac{1}{2}\min \frac{a_{i',j'}}{a_{i,j}}<\frac{a_{i,m}}{a_{i,j}},
\end{align*}
contradicting the choice of $i$ and $j$.  Therefore $a_{i,m}=0$.  If $m\geq l+1$ then
\begin{align*}
\frac{b_m'}{b_j'}=\frac{b_m}{b_j}.
\end{align*}
It follows that
\begin{align*}
\min_{m=1,\ldots,r} \frac{b_m'}{a_{i,m}}=\frac{b_j'}{a_{i,j}}=\lambda\frac{b_j}{a_{i,j}}.
\end{align*}
Therefore
\begin{align*}
n_j(Q',P_1,\ldots, P_q)\geq n_j(Q,P_1,\ldots, P_q)\geq \frac{q}{r}-\frac{r-1}{2}, \quad j=1,\ldots, r,
\end{align*}
and replacing $Q$ by $Q'$, we now have the inequality
\begin{align*}
\frac{b_{l+1}}{b_l}\leq 2\max \frac{a_{i,j}}{a_{i',j'}}.
\end{align*}

Repeating this argument finitely many times, we find a suitable $Q=(b_1,\ldots, b_r)$ with positive rational coordinates such that for $l=1,\ldots, r-1$,  
\begin{align*}
\frac{b_{l+1}}{b_l}\leq 2\max \frac{a_{i,j}}{a_{i',j'}},
\end{align*}
which implies \eqref{bineq}.\end{proof}
\begin{proof}[Proof of Proposition \ref{keyprop}]

We take $$\alpha_{1,1}(\kappa),\ldots,\alpha_{1,r}(\kappa),\alpha_{2,1}(\kappa),\ldots,\alpha_{2,r}(\kappa),\ldots,\alpha_{q,1}(\kappa),\ldots,\alpha_{q,r}(\kappa)$$
to be (discontinuous) functions of $\kappa\in (0,1]$ with the following properties. The function $\alpha_{i,j}(\kappa)$ is identically equal to $0$ if $a_{i,j}=0$. If, on the other hand, $a_{i,j}\not =0$, then $\alpha_{i,j}(\kappa)$ takes on positive real values such that we have the limits
$$\lim_{\kappa\searrow 0} \alpha_{i,j}(\kappa) = 0.$$
Moreover, the $\RR$-divisors $B_i(\kappa) = \alpha_{i,1}(\kappa)E_1+\ldots+ \alpha_{i,r}(\kappa)E_r$ are such that
$$D_i'(\kappa):=D_i+B_i(\kappa)\equiv \sum_{j=1}^r a'_{i,j}(\kappa) E_j,\quad i=1,\ldots,q,$$
have rational coefficients $a'_{i,j}(\kappa) = a_{i,j} + \alpha_{i,j}(\kappa)$ and the vectors
$$P'_1(\kappa)=(a'_{1,1}(\kappa),\ldots, a'_{1,r}(\kappa)),\ldots, P'_q(\kappa)=(a'_{q,1}(\kappa),\ldots, a'_{q,r}(\kappa))$$ 
satisfy the assumptions of Lemma \ref{mainl}. Therefore, we can conclude that, for all $\kappa$, there exists a vector $Q'(\kappa)=(b'_1(\kappa),\ldots,b'_r(\kappa))$ as in Lemma~\ref{mainl} with respect to $P'_1(\kappa),\ldots,P'_q(\kappa)$.  We normalize the coordinates so that $b'_1=1$.  Then from the definitions and Lemma \ref{mainl}, for a sufficiently small choice of $\hat \kappa>0$ (we now fix one such choice), there exist positive rational constants $\gamma_1, \gamma_2, \gamma_3$, and $\gamma_4$ such that for all $0<\kappa<\hat \kappa$,
\begin{align*}
\gamma_1<a_{i,j}'(\kappa)<\gamma_2
\end{align*}
for all $i$ and $j$ such that $a_{i,j}'(\kappa)\neq 0$ (or equivalently, $a_{i,j}\neq 0$), and
\begin{align*}
\gamma_3<b_j'(\kappa)<\gamma_4, \quad j=1,\ldots, r.
\end{align*}
We now choose a fixed positive rational number $\delta<\frac{\gamma_1\gamma_3}{2\gamma_2\gamma_4}$ and a fixed $0< \kappa_0=\kappa(\delta)<\hat\kappa $ such that
\begin{align}\label{delta_div}
\delta\gamma_3\sum_{j=1}^r E_j-\frac{\gamma_4}{\gamma_1} \sum_{i=1}^qB_i(\kappa_0)
\end{align}
is $\QQ$-nef.  We now set $Q'=Q'(\kappa_0)=(b'_1,\ldots,b'_r)$ with $b_1'=1$ and let
\begin{align*}
A=b'_1E_1+\ldots+ b'_rE_r.
\end{align*}
Then $A$ is $\QQ$-ample.  \par
We define positive rational numbers
\begin{align*}
c_i:=\min_{j=1,\ldots,r} \frac{b_j'}{a_{i,j}'(\kappa_0)}< \frac{\gamma_4}{\gamma_1},\quad i=1,\ldots,q.
\end{align*}
For $\RR$-divisors $F_1$ and $F_2$, we write $F_1\geq F_2$ if the difference $F_1-F_2$ is a nef $\RR$-divisor.  Then 
\begin{align*}
A-c_iD_i&\geq A-c_iD_i'(\kappa_0)\\
&\equiv \sum_{j=1}^r (b_j'- c_i a_{i,j}'(\kappa_0))E_j,
\end{align*}
which implies that $A-c_iD_i$ is a nef $\QQ$-divisor for $i=1,\ldots,q$.\par
We now deal only with the general case $r\geq 3$, as the cases $r=1,2$ are easy specializations of the following argument.\par
Since 
\begin{align*}
q\geq r(n+1)+\frac{(r-1)(r-2)}{2}=rn+\frac{r(r-1)}{2}+1, 
\end{align*}
we have
\begin{align*}
n_j(Q',P_1'(\kappa_0),\ldots, P_q'(\kappa_0))\geq \frac q r - \frac{r-1} 2> n, \quad j=1,\ldots, r.
\end{align*}
Therefore,
\begin{align}
\label{njeq}
n_j(Q',P_1'(\kappa_0),\ldots, P_q'(\kappa_0))\geq n+1, \quad j=1,\ldots, r,
\end{align}
as $n_j(Q',P_1'(\kappa_0),\ldots, P_q'(\kappa_0))$ is an integer.  Let $j\in \{1,\ldots, r\}$.  By hypothesis, at most $(r-1)\lfloor \frac{q}{r}\rfloor\leq q-\frac{q}{r}$ of the vectors $P_1,\ldots, P_q$ lie in $\Span(\{e_1,\ldots, e_r\}\setminus \{e_j\})$.  Since $q>r(n+1)$, it follows that there are at least $\lceil\frac{q}{r}\rceil\geq n+2$ points $P_i'(\kappa_0)$ with $a_{i,j}'(\kappa_0)>0$.  Combined with \eqref{njeq}, this implies that
\begin{align*}
\sum_{i=1}^q c_iD_i'(\kappa_0)&\geq \sum_{j=1}^r (n+1)b'_jE_j+\sum_{j=1}^r \left(\min_i c_i\right)\left(\min_{\substack{i, a_{i,j}'(\kappa_0)\neq 0}}a_{i,j}'(\kappa_0)\right)E_j\\
&\geq (n+1)A+\frac{\gamma_1\gamma_3}{\gamma_2}\sum_{j=1}^rE_j\\
&\geq \left(n+1+\frac{\gamma_1\gamma_3}{\gamma_2\gamma_4}\right)A.
\end{align*}

Finally, we find the inequalities
\begin{align*}
&\sum_{i=1}^q c_iD_i-\left(n+1+\delta\right)A\\=& \sum_{i=1}^q c_iD_i'(\kappa_0)-(n+1+2\delta)A+\delta A-\sum_{i=1}^q c_iB_i (\kappa_0)\\
\geq & \left(n+1+\frac{\gamma_1\gamma_3}{\gamma_2\gamma_4}\right)A-(n+1+2\delta)A +\delta\gamma_3\sum_{j=1}^rE_j-\frac{\gamma_4}{\gamma_1}\sum_{i=1}^q B_i(\kappa_0)\\
\geq & \underbrace{ \left(\frac{\gamma_1\gamma_3}{\gamma_2\gamma_4}-2\delta\right)}_{>0} A,
\end{align*}
where the last inequality is due to \eqref{delta_div}. Therefore, 
\begin{align*}
\sum_{i=1}^q c_iD_i-\left(n+1+\delta\right)A
\end{align*}
is $\mathbb{Q}$-ample and in particular $\mathbb{Q}$-nef.  Finally, by rescaling the coefficients $b_j'$ appearing in $A$ (and rescaling the $c_i$ by the same factor), we can assume that $A$ is an ample divisor (and not just an ample $\mathbb{Q}$-divisor).
\end{proof}

\section{Proof of Theorem \ref{mthm}\eqref{mthmb} and Theorem \ref{mthm2}}
\label{proofb}

We use the following simple lemma.

\begin{lemma}
\label{thmblem}
Let $P_i\in \mathbb{R}^r\setminus \{0\}$, $i=1,\ldots, q$, be vectors with non-negative coordinates.  Let $e_1,\ldots, e_r$ be the standard coordinate vectors.  Suppose that for any proper subset $T\subset \{e_1,\ldots, e_r\}$ of cardinality $t$, at most $t\left\lfloor\frac{q}{r}\right\rfloor$ of the vectors $P_i$ are supported on $T$.  Then there exist pairwise disjoint subsets $I_1,\ldots, I_{\left\lfloor \frac{q}{r}\right\rfloor}\subset \{1,\ldots, q\}$ of cardinality $r$ such that the vector
\begin{align*}
\sum_{i\in I_j}P_i
\end{align*}
has positive coordinates for $j=1,\ldots, \left\lfloor \frac{q}{r}\right\rfloor$.
\end{lemma}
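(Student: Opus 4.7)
The plan is to encode the lemma as a bipartite matching problem and invoke Hall's marriage theorem. Set $m = \lfloor q/r \rfloor$, and for each $i$, let $S_i \subseteq \{1,\ldots,r\}$ denote the indices of the positive coordinates of $P_i$; note that $S_i \neq \emptyset$ since $P_i \neq 0$. Consider the bipartite graph with left vertex set $V_1 = \{1,\ldots,r\} \times \{1,\ldots,m\}$ of size $rm$ and right vertex set $V_2 = \{1,\ldots,q\}$, placing an edge between $(j,k) \in V_1$ and $i \in V_2$ whenever $j \in S_i$. If a matching saturating $V_1$ exists, then each $(j,k)$ is matched to a distinct $i(j,k) \in V_2$ with $j \in S_{i(j,k)}$, and defining $I_k = \{i(1,k), i(2,k), \ldots, i(r,k)\}$ for $k=1,\ldots,m$ produces pairwise disjoint $r$-subsets of $\{1,\ldots,q\}$ such that $\sum_{i \in I_k} P_i$ has positive $j$-th coordinate for every $j$, as required.

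It remains to verify Hall's condition. Fix $U \subseteq V_1$ and let $J = \{j : (j,k) \in U \text{ for some } k\} \subseteq \{1,\ldots,r\}$; then $|U| \leq |J| m$, and the neighborhood in $V_2$ is $N(U) = \{i : S_i \cap J \neq \emptyset\}$. If $J = \{1,\ldots,r\}$, then since no $P_i$ is zero, $N(U) = \{1,\ldots,q\}$, so $|N(U)| = q \geq rm \geq |U|$. Otherwise, set $T = \{e_j : j \notin J\}$, a proper subset of $\{e_1,\ldots,e_r\}$ of cardinality $r - |J|$. The vectors not in $N(U)$ are exactly those $P_i$ with $S_i \subseteq \{1,\ldots,r\} \setminus J$, i.e., those supported on $T$. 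The hypothesis of the lemma gives at most $(r-|J|)\lfloor q/r \rfloor = (r-|J|)m$ such vectors, so
\[
|N(U)| \geq q - (r-|J|)m \geq rm - (r-|J|)m = |J| m \geq |U|.
\]
Hall's condition holds in both cases, the desired matching exists, and the lemma follows.

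There isn't really a serious obstacle: the hypothesis on proper subsets $T$ is tailored precisely to what one needs to check the Hall deficiency of the complementary set of coordinates, and the slight subtlety that $|U|$ might be less than $|J|m$ is absorbed by the chain of inequalities above. The only mildly delicate point is the boundary case $J = \{1,\ldots,r\}$, which is not covered by the hypothesis (since $T$ would have to be empty) but is handled trivially by $P_i \neq 0$ and $q \geq rm$.
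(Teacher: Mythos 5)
Your proof is correct, and it takes a genuinely different route from the paper's. The paper argues by induction on $r$: projecting to the first $r-1$ coordinates, using the hypothesis with $T=\{e_1,\dots,e_{r-1}\}$ to bound the number of vectors with vanishing projection or vanishing last coordinate, reindexing so that $P_1,\dots,P_{q'}$ (with $q'=q(r-1)/r$) have nonzero projection while $P_{q'+1},\dots,P_q$ have positive last coordinate, applying the inductive hypothesis to the projections, and then appending one index with positive last coordinate to each set $I'_j$. Your approach instead sets up a single bipartite matching problem and verifies Hall's condition globally; the hypothesis on proper subsets $T$ translates exactly into the deficiency bound $|N(U)|\geq q-(r-|J|)m$. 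Your argument is shorter and avoids the bookkeeping of the reindexing and the doubly indexed families, at the price of invoking Hall's theorem rather than staying entirely elementary. Both are complete, and each makes the role of the hypothesis equally transparent: in the paper it controls how many vectors survive a coordinate projection, in yours it controls the Hall deficiency of the complementary coordinate set.
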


\begin{proof}
We prove the result by induction on the dimension $r$.  For $r=1$ the result is trivial.  Suppose now that $r\geq 2$ and the result holds in dimension $r-1$.  By dropping some of the $P_i$ and replacing $q$ by $r \left\lfloor \frac{q}{r}\right\rfloor$, it suffices to prove the case that $q$ is divisible by $r$.  Let $\pi:\mathbb{R}^r\to\mathbb{R}^{r-1}$ denote the projection onto the first $r-1$ coordinates.    By hypothesis, there are at most $\frac{q}{r}$ vectors $P_i$ with $\pi(P_i)=0$, and hence at least
\begin{align*}
q':=q-\frac{q}{r}=\frac{q(r-1)}{r}
\end{align*}
vectors $P_i$ such that $\pi(P_i)\neq 0$.  Similarly, taking $t=r-1$, there are at most $q'$ vectors $P_i$ whose last coordinate is $0$ (and necessarily $\pi(P_i)\neq 0$ for such $P_i$).  Then after reindexing, we can assume that $\pi(P_i)\neq 0$, $i=1,\ldots, q'$, and that $P_{q'+1},\ldots, P_q$ have positive $r$th coordinate.  Since $\frac{q'}{r-1}=\frac{q}{r}$ as well, we can apply the inductive hypothesis to $\pi(P_1),\ldots, \pi(P_{q'})\in \mathbb{R}^{r-1}\setminus \{0\}$.  It follows that there exist disjoint subsets $I'_1,\ldots, I'_{\frac{q'}{r-1}}\subset\{1,\ldots, q'\}$ of cardinality $r-1$ such that
\begin{align*}
\sum_{i\in I_j'}\pi(P_i)
\end{align*}
has positive coordinates in $\mathbb{R}^{r-1}$ for $j=1,\ldots, \frac{q'}{r-1}=\frac{q}{r}$.  Let $I_j=I_j'\cup \{q'+j\}$, $j=1,\ldots, \frac{q}{r}$.  Then 
\begin{align*}
\sum_{i\in I_j}P_i
\end{align*}
has positive coordinates for $j=1,\ldots, \frac{q}{r}$ as desired.
\end{proof}

Lemma \ref{thmblem} has the following consequence in the context of Theorem \ref{mthm}.

\begin{proposition}
Let $X$ be a projective variety.  Let $E_1,\ldots, E_r$ be nef Cartier divisors on $X$ with $\sum_{j=1}^rE_j$ ample. Let $D_1,\ldots, D_q$ be non-zero effective (possibly reducible) Cartier divisors in general position on $X$, and suppose that $D_i\equiv \sum_{j=1}^ra_{i,j}E_j$, $i=1,\ldots, q$, where the coefficients $a_{i,j}$ are non-negative real numbers.  Let $P_i=(a_{i,1},\ldots, a_{i,r})\in \mathbb{R}^r$, $i=1,\ldots, q$.    Assume that for any proper subset $T$ of the set of standard basis vectors $\{e_1,\ldots, e_r\}\subset\mathbb{R}^r$, at most $(\#T)\left\lfloor\frac{q}{r}\right\rfloor$ of the vectors $P_1,\ldots, P_q$ are supported on $T$.  Then there exist $q'= \left\lfloor \frac{q}{r}\right\rfloor$ ample effective divisors $A_1,\ldots, A_{q'}$ in general position on $X$ with support contained in the support of $\sum_{i=1}^qD_i$.
\end{proposition}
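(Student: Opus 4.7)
The plan is to apply Lemma~\ref{thmblem} to the numerical data $P_1,\ldots,P_q\in\mathbb{R}^r$ and then take sums of the $D_i$'s over the resulting index blocks. Concretely, let $I_1,\ldots,I_{q'}\subset\{1,\ldots,q\}$ with $q'=\lfloor q/r\rfloor$ be the pairwise disjoint subsets of cardinality $r$ provided by Lemma~\ref{thmblem}, so that $\sum_{i\in I_j}P_i$ has strictly positive coordinates for each $j$. Define
\begin{equation*}
A_j=\sum_{i\in I_j}D_i, \quad j=1,\ldots,q'.
\end{equation*}
Each $A_j$ is effective, and $\Supp A_j=\bigcup_{i\in I_j}\Supp D_i\subset\Supp\sum_{i=1}^q D_i$, giving the support condition immediately.

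For ampleness, I would compute
\begin{equation*}
A_j\equiv \sum_{l=1}^r \Big(\sum_{i\in I_j}a_{i,l}\Big)E_l,
\end{equation*}
and set $c_l=\sum_{i\in I_j}a_{i,l}$, which is strictly positive by the choice of $I_j$. Writing $c=\min_l c_l>0$, we have $\sum_l c_l E_l = c\sum_l E_l + \sum_l(c_l-c)E_l$, which is an ample $\mathbb{R}$-divisor plus a nef $\mathbb{R}$-divisor, hence ample. Since ampleness is a numerical property (by Kleiman's criterion, or equivalently because the ample cone is open in $N^1(X)_\mathbb{R}$), the Cartier divisor $A_j$ itself is ample.

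For general position, given $J'\subset\{1,\ldots,q'\}$ with $|J'|\leq n+1$, I would use the distributive identity
\begin{equation*}
\bigcap_{j\in J'}\Supp A_j=\bigcap_{j\in J'}\bigcup_{i\in I_j}\Supp D_i=\bigcup_{(i_j)\in\prod_{j\in J'}I_j}\bigcap_{j\in J'}\Supp D_{i_j}.
\end{equation*}
Because the $I_j$'s are pairwise disjoint, in each term of the union the indices $i_j$ are $|J'|$ distinct elements of $\{1,\ldots,q\}$, so the general position of the $D_i$'s yields $\codim \bigcap_{j\in J'}\Supp D_{i_j}\geq |J'|$. Taking the minimum over the finite union preserves this codimension bound, so $A_1,\ldots,A_{q'}$ are in general position.

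The argument is essentially bookkeeping once Lemma~\ref{thmblem} is in hand; the only point that deserves care is the ``positivity'' step, i.e.\ recognizing that a strictly positive $\mathbb{R}$-linear combination of the $E_l$ is ample (rather than merely nef and big) because $\sum_l E_l$ is ample, together with the invariance of ampleness under numerical equivalence. The general position verification reduces neatly to the $D_i$ case thanks to the disjointness of the index blocks, which is the reason we needed disjoint $I_j$'s in Lemma~\ref{thmblem} and not merely the existence of one good combination.
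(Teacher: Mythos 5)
Your proof is correct and follows the same route as the paper: apply Lemma~\ref{thmblem} to extract $q'$ disjoint blocks $I_j$, set $A_j=\sum_{i\in I_j}D_i$, and verify ampleness and general position. The only difference is that you spell out the general-position and positivity steps which the paper dismisses as elementary; your expansions (the distributive identity for supports using disjointness, and writing the positive combination as ample plus nef) are exactly the right way to fill them in.
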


\begin{proof}
Let $I_1,\ldots, I_{q'}\subset \{1,\ldots, q\}$ be as in Lemma \ref{thmblem} (with respect to $P_1,\ldots, P_q$) and let
\begin{align*}
A_m=\sum_{i\in I_m}D_i, \quad m=1,\ldots, q'.
\end{align*}
Since the divisors $D_1, \ldots, D_q$ are in general position on $X$ and the sets $I_m$ are pairwise disjoint, it is elementary that the divisors $A_1,\ldots, A_{q'}$ are in general position on $X$.  Moreover, since $\sum_{j=1}^rE_j$ is ample, $E_1,\ldots, E_r$ are nef divisors, and by construction, $A_m$ is numerically equivalent to a positive linear combination of $E_1,\ldots, E_r$, it follows that each divisor $A_m$ is ample.
\end{proof}

Theorem \ref{mthm}\eqref{mthmb} is now an immediate consequence of the preceding proposition combined appropriately with Theorem \ref{NW}, as the rank of the subgroup in $\NS X$ generated by $D_1,\ldots, D_q$ is no greater than the number of nef divisors $E_1,\ldots,E_r$ in the assumptions of Theorem \ref{mthm}. Moreover, Theorem \ref{mthm2} is now an immediate consequence of  Theorem \ref{Autissier} and Theorem \ref{Levin2}. Here, we use the fact that if $X\setminus E$ is arithmetically (quasi-)hyperbolic and $\Supp E\subset \Supp D$, then $X\setminus D$ is arithmetically (quasi-)hyperbolic.

\section{Examples}
\label{examples}

We first give two examples showing that in Vojta's Theorems \ref{Vojta1} and \ref{Vojta2} the conclusions cannot, in general, be strengthened to quasi-hyperbolicity statements.  In the first example the divisors $D_i$ are ample, but not in general position, and in the second example the divisors $D_i$ are in general position, but are not ample. 

\begin{example}
\label{P2lines}
Let $X=\mathbb{P}^2$ and let $D$ be a sum of at least $4$ lines passing through a fixed point $P\in \mathbb{P}^2(k)$. Theorems \ref{Vojta1} and \ref{Vojta2} imply that any set of $(D,S)$-integral points is not Zariski dense in $\mathbb{P}^2$ (in fact, by Siegel's theorem, this already holds when $D$ consists of the sum of just $3$ lines passing through $P\in \mathbb{P}^2(k)$, as in this case $\mathbb{P}^2\setminus D\cong \mathbb{A}^1\times (\mathbb{P}^1\setminus \{0,1,\infty\})$).  On the other hand, it is easy to see that any line $L$ through $P$ not contained in the support of $D$ contains an infinite set of $(D,S)$-integral points (for some $k$ and $S$).  Thus, $X\setminus D$ is not arithmetically quasi-hyperbolic.
\end{example}

\begin{example}
\label{P1squared}
Let $X=\mathbb{P}^1\times \mathbb{P}^1$ and let $D$ be a sum of at least $5$ fibers of the first natural projection.  Theorems \ref{Vojta1} and \ref{Vojta2} imply that any set of $(D,S)$-integral points is not Zariski dense in $\mathbb{P}^1\times \mathbb{P}^1$ (again, $3$ fibers are actually sufficient from an $S$-unit equation argument).  On the other hand, it is easy to see that any fiber of the first projection (not contained in the support of $D$) contains an infinite set of $(D,S)$-integral points (for some $k$ and $S$).  Then $X\setminus D$ is not arithmetically quasi-hyperbolic.
\end{example}

Next we give a sample application of Theorem \ref{mthm}\eqref{mthma} which does not seem to follow na\"ively from other previous results.

\begin{example}
\label{appex}
Let $X$ be a non-singular projective variety of dimension $n$, defined over a number field $k$, with nef effective divisors $E_1,E_2,E_3$ on $X$ such that $E_1+E_2+E_3$ is ample, but $E_i+E_j$ is not ample, or even big, for all $i,j\in \{1,2,3\}$ (for instance, one could take $A$ an ample effective divisor on a non-singular projective $Y$, let $X=Y^3$, and let $E_i=\pi_i^*A$, $i=1,2,3$, where $\pi_i$ is the $i$th natural projection map $\pi_i:X\to Y$).  Let $D_{i,j,k}$ be an effective divisor numerically equivalent to some positive (rational) linear combination $a_{i,j,k}E_i+b_{i,j,k}E_j$ for $1\leq i<j\leq 3$, $1\leq k\leq n+2$.  Suppose that the $3n+6$ effective divisors $D_{i,j,k}$ are in general position on $X$ and let $D=\sum_{i,j,k}D_{i,j,k}$.  Then by Theorem \ref{mthm}\eqref{mthma}, $X\setminus D$ is arithmetically quasi-hyperbolic.  

It does not seem straightforward to deduce this consequence, in general, from earlier results without using arguments similar to the present ones.  For instance, with Autissier's Theorem \ref{Autissier} in mind, there is not a way to generate more than $\frac{3}{2}n+3$ ample effective divisors in general position from the $D_{i,j,k}$ (assuming they are irreducible) nor (in view of \cite[Th.~3.2]{levin_duke}) a way to generate $n+2$ numerically equivalent ample effective divisors in general position from the $D_{i,j,k}$ (for general choices of $a_{i,j,k}$ and $b_{i,j,k}$).  We emphasize that the arithmetic quasi-hyperbolicity of $X\setminus D$ is the key aspect here (Zariski non-density of integral points follows easily from, say, Vojta's Theorem \ref{Vojta2}).

The above example can be naturally extended to the case of arbitrary
$r\geq 3$ and thus shows that our result is genuinely new. On the
other hand, with some additional considerations in the spirit of Lemma
\ref{mainl}, the cases $r=1,2$ of Theorem \ref{mthm}\eqref{mthma} may
be reduced to \cite[Th.~3.2]{levin_duke}).

\end{example}

The last two examples concern the sharpness of Conjecture \ref{conj2}.

\begin{example}
\label{conjaex}
Let $r$ and $n$ be positive integers with $r\leq n$, and let $Y_1,\ldots, Y_{r-1}$ be codimension $2$ linear spaces in $\mathbb{P}^n$ defined over a number field $k$ and in general position (i.e., all intersections among them have the expected dimension).  Let $\pi:X_{n,r}\to\mathbb{P}^n$ be the blowup along $Y_1\cup\cdots\cup Y_{r-1}$.  Let $H_{1,j},\ldots, H_{n-r+2,j}$, $j=1,\ldots, r$ be hyperplanes over $k$ passing through $Y_j$ for $j=1,\ldots, r-1$ (with no such condition when $j=r$) and let $H_{i,j}'$ be the strict transform of $H_{i,j}$ in $X_{n,r}$, $i=1,\ldots, n-r+2, j=1,\ldots, r$.  We additionally choose the hyperplanes $H_{i,j}$ so that the divisors $H_{i,j}'$ are in general position on $X_{n,r}$ and let $D_{n,r}=\sum_{i,j}H_{i,j}'$.  We prove by induction on $r$ that $X_{n,r}\setminus D_{n,r}$ is not arithmetically quasi-hyperbolic.  If $r=1$ then $X_{n,r}=\mathbb{P}^n$ and $D$ is a sum of $n+1$ hyperplanes in general position.  It is well-known that in this case for any appropriate finite set of places $S$ with $|S|>1$ there is a Zariski-dense set of $(D_{n,r},S)$-integral points on $X_{n,r}$.

If $r>1$, let $H$ be a hyperplane containing $Y_{r-1}$, distinct from any hyperplane $H_{i,j}$, and let $H'$ be its strict transform in $X_{n,r}$ (note that $H'\cap H'_{i,r-1}=\emptyset$, $i=1,\ldots, n-r+2$).  For a general choice of $H$ (subject to the condition that it contains $Y_{r-1}$), $H'\setminus D_{n,r}$ is isomorphic to a variety of the form $X_{n-1,r-1}\setminus D_{n-1,r-1}$ (for a suitable choice of parameters).  Then by induction, $H'\setminus D_{n,r}$ is not arithmetically quasi-hyperbolic.  As the union of such $H'$ is Zariski dense in $X_{n,r}$ we find that $X_{n,r}\setminus D_{n,r}$ is not arithmetically quasi-hyperbolic.  Finally, we note that $D_{n,r}$ is a sum of $r(n-r+2)$ divisors which are easily checked to satisfy the hypotheses of Theorem \ref{mthm} (with the choice $E_j=H'_{1,j}$, $j=1,\ldots, r$).  Thus, in Conjecture \ref{conj2}\eqref{conja}, if $r\leq n$ then for the conclusion to hold it is necessary at least that $q\geq r(n-r+2)+1$.
\end{example}

\begin{example}
\label{conjbex}
Let $T=\{P_1,\ldots, P_s, Q,R\}$ be a set of distinct collinear points in $\mathbb{P}^n(k)$ lying on a line $L$.  Let $H_1,\ldots, H_{2n(s+1)}$ be hyperplanes over $k$ in $\mathbb{P}^n$ such that each $H_i$ contains exactly one point in $T$, the intersection of any $n+1$ of the hyperplanes is contained in $T$, and
\begin{align*}
\bigcap_{i=(j-1)(2n)+1}^{j(2n)}H_i&=\{P_j\}, \quad j=1,\ldots, s,\\
\bigcap_{i=(2s)n+1}^{(2s+1)n}H_i&=\{Q\}, \\
\bigcap_{i=(2s+1)n+1}^{(2s+2)n}H_i&=\{R\}.
\end{align*}
Let $\pi:X\to \mathbb{P}^n$ be the blowup at the $s$ points $P_1,\ldots, P_s$ and let $D_i$ be the strict transform of $H_i$ in $X$, $i=1,\ldots, 2n(s+1)$.  Let $r=s+1$.  Then the divisors $D_1,\ldots, D_{2nr}$ are easily seen to satisfy the hypotheses of Theorem \ref{mthm}, where $E_i=D_{2in}$, $i=1,\ldots, r$.  Let $L'$ denote the strict transform of $L$.  Then $L'$ intersects $D=\sum_{i=1}^{2nr}D_i$ only in the points $\pi^{-1}(Q)$ and $\pi^{-1}(R)$, and so $X\setminus D$ admits a non-constant morphism from $\mathbb{G}_m$. It follows that $X\setminus D$ is not arithmetically hyperbolic and that the inequality in Conjecture \ref{conj2}\eqref{conjb} is sharp (if true).
\end{example}

\end{document}